\def\bZ{\mathbb{Z}}
\def\bQ{\mathbb{Q}}
\def\bR{\mathbb{R}}
\def\bC{\mathbb{C}}
\def\bG{\mathbb{G}}
\def\bZ{\mathbb{Z}}
\def\cA{\mathcal{A}}
\def\cD{\mathcal{D}}
\def\cF{\mathcal{F}}
\def\cL{\mathcal{L}}
\def\cM{\mathcal{M}}
\def\cV{\mathcal{V}}
\theoremstyle{plain}
\newtheorem{thm}{Theorem}[section]
\newtheorem*{thm*}{Theorem}
\newtheorem{lem}[thm]{Lemma}
\newtheorem{prop}[thm]{Proposition}
\newtheorem{cor}[thm]{Corollary}
\newtheorem{defn}[thm]{Definition}
\newtheorem{thmintro}{Theorem}
\DeclareMathOperator{\Sp}{Sp}
\DeclareMathOperator{\rG}{G}
\DeclareMathOperator{\rH}{H}
\DeclareMathOperator{\rN}{N}
\DeclareMathOperator{\rS}{S}
\DeclareMathOperator{\GL}{GL}
\DeclareMathOperator{\Sh}{Sh}
\DeclareMathOperator{\sh}{sh}
\DeclareMathOperator{\Alb}{Alb}
\DeclareMathOperator{\alb}{alb}
\DeclareMathOperator{\HH}{H}
\DeclareMathOperator{\Sing}{Sing}
\DeclareMathOperator{\Reg}{Reg}
\newcommand{\sbt}{\,\begin{picture}(-1,1)(-1,-3)\circle*{3}\end{picture}\ }
\title{Hyperbolicity in presence of a large local system}
\author{Yohan Brunebarbe}
\address{Institut de Math\'ematiques de Bordeaux, Universit\'e de Bordeaux, 351 cours de la Lib\'eration, F-33405 Talence}
\email{yohan.brunebarbe@math.u-bordeaux.fr}
\begin{document}

\begin{abstract}
We prove that the projective complex algebraic varieties admitting a large complex local system satisfy a strong version of the Green-Griffiths-Lang conjecture.
\end{abstract}

\maketitle
\section{Introduction}

Let $X$ be a (non-necessarily smooth nor irreducible, but reduced) proper complex algebraic variety. Following Lang, we define the \emph{special} subsets $\Sp_{alg}(X)$, $\Sp_{ab}(X)$, and $\Sp_h(X)$ of $X$ as the union respectively of
\begin{itemize}
\item all (positive-dimensional) integral closed subvarieties not of general type;
\item the images of all non-constant rational maps $A \dasharrow X$ with source an abelian variety $A$;
\item all the entire curves of $X$, i.e. the image of all non-constant holomorphic maps $\bC \to X$.
\end{itemize}
It is not clear from their definition whether these subsets are Zariski-closed in $X$. The inclusions $\Sp_{ab}(X) \subset \Sp_{alg}(X)$ and $\Sp_{ab}(X) \subset \Sp_h(X)$ always hold, see Proposition \ref{trivial inclusions}. Inspired by an earlier conjecture of Green-Griffiths \cite{Green-Griffiths79}, Lang \cite{Lang86} conjectured that the Zariski closures of both $\Sp_{h}(X)$ and $\Sp_{ab}(X)$ are equal to $\Sp_{alg}(X)$, and that this locus is equal to $X$ if and only if $X$ is not of general type\footnote{One says that a non-necessarily irreducible projective variety is of general type if at least one of its irreducible component is.}. \\

In this paper, we establish that every projective complex algebraic variety that admits a large complex local system  satisfies the following strengthened form of the Green–Griffiths–Lang conjecture.

\begin{thmintro}\label{Lang for large}
Let $X$ be a projective complex algebraic variety. Assume that there exists a large complex local system on $X$. Then, 
\begin{enumerate}
\item The equality $\Sp_{alg}(X) = \Sp_{ab}(X) = \Sp_h(X)$ holds; therefore, the special subset is denoted $\Sp(X)$ without risk of confusion.
\item $ \Sp(X)$ is Zariski-closed in $X$;
\item $ \Sp(X) \neq X$ if and only if $X$ is of general type.
\end{enumerate} 
\end{thmintro}

Recall that a complex local system $\cL$ on a proper complex algebraic variety is large if for every integral closed subvariety $Z \hookrightarrow X$ the pull-back of $\cL$ to the normalization of $Z$ is non-isotrivial\footnote{A local system on an algebraic variety $X$ is called isotrivial if it becomes trivial on a finite étale cover of $X$.}. Equivalently, the Galois \'etale cover $\tilde{X}^\cL \to X$ associated to the kernel of the monodromy representation of $\cL$ does not have any positive-dimensional compact complex subspaces, cf. Proposition \ref{covering associated to a large local system}. This holds, for example, when the complex analytic space $ \tilde{X}^\cL$ is Stein, and turns out to be equivalent at least when $X$ is smooth \cite{Eyssidieux-reductive, EKPR}. \\

There is also the slightly weaker notion of bigness for complex local systems, which has the advantage of being a birationally invariant property. A complex local system $\cL$ on a connected normal projective complex algebraic variety is said to be big (or generically large in the terminology of Koll\'ar \cite{Kollar-Shafarevich}) if there exists a countable collection of proper closed subvarieties $D_i \subsetneq X$ such that, for every irreducible closed subvariety $Z \hookrightarrow X$ not contained in $\cup_i D_i$, the pull-back of $\cL$ to the normalization of $Z$ is non-isotrivial.

\begin{thmintro}\label{Lang for big}
Let $X$ be a connected normal projective complex algebraic variety. Assume that there exists a big complex local system on $X$. Then, the following assertions are equivalent:
\begin{enumerate}
\item $X$ is of general type;
\item $\Sp_{alg}(X)$ is not Zariski-dense in $X$; 
\item $\Sp_{ab}(X)$ is not Zariski-dense in $X$;
\item $\Sp_{h}(X)$ is not Zariski-dense in $X$.
\end{enumerate}     
\end{thmintro}

Thanks to \cite[Theorem 1]{Zuo-Chern-hyperbolicity} and \cite[Theorem 1]{CCE}, a connected normal projective complex algebraic variety $X$ equipped with a big complex local system whose algebraic monodromy group is semisimple is of general type. Therefore, Theorem \ref{Lang for big} ensures that in this setting the special sets are not Zariski-dense. \\

Examples of projective complex algebraic varieties admitting a large complex local system include:
\begin{enumerate}
    \item Projective complex algebraic varieties admitting a finite morphism to an abelian variety. In that case, Theorem \ref{Lang for large} follows from the works of Bloch \cite{Bloch26}, Ueno \cite{Ueno}, Ochiai \cite{Ochiai77}, Kawamata \cite{Kawamata80} and Yamanoi \cite{Yamanoi-maximal-Albanese}, see \cite{Bruni-relative-Lang}.
    \item Projective complex algebraic varieties admitting a (graded-polarizable) variation of $\bZ$-mixed Hodge structure with a finite period map. When in addition the Hodge structures are pure, it follows from the works of Griffiths and Schmid \cite{Griffiths-Schmid} that $ \Sp_{alg}(X) = \Sp_{ab}(X) = \Sp_h(X) = \varnothing$. In the mixed case, however, one can get nonempty special subsets, and this makes the proof of Theorem \ref{Lang for large} significantly more difficult. 
    \item The previous class includes all projective complex algebraic varieties admitting a finite morphism to the universal principally polarized abelian variety of dimension $g \geq 1$. The validity of Theorem \ref{Lang for large} in this case is new to our knowledge.
\end{enumerate}

In essence, the proof of Theorem \ref{Lang for large} and Theorem \ref{Lang for big} reduces the general case to the special cases above, relying on general structural results from non-abelian Hodge theory. This approach is reminiscent of the strategy introduced by Zuo \cite{Zuo-Chern-hyperbolicity}, where he shows that any smooth projective complex variety equipped with a big complex local system whose algebraic monodromy group is almost-simple admits a proper closed subvariety containing all curves of geometric genus $\leq 1$; see also \cite{Yamanoi-Fourier, CCE, Javanpeykar-Rousseau} for related hyperbolicity results in the presence of a local system. However, proving Theorem \ref{Lang for large} and Theorem \ref{Lang for big} requires substantially more precise results, as we must both handle subvarieties of dimension $>1$ and avoid assuming that the algebraic monodromy group is semisimple. In the non-semisimple case, a key ingredient is a result on the behaviour of special subsets in families of varieties of maximal Albanese dimension proved in \cite{Bruni-relative-Lang}.\\

About six months after the appearance of our work on the arXiv, Cadorel--Deng--Yamanoi \cite{CDY}, relying on recent progress on the existence of harmonic maps into Bruhat–Tits buildings, extended Theorem~\ref{Lang for big} to the setting of smooth quasi-projective complex algebraic varieties, under the additional assumption that the algebraic monodromy group is semisimple. The complete generalization of Theorem~\ref{Lang for big}, and a fortiori of Theorem~\ref{Lang for large}, remains open.

 \subsubsection*{Acknowledgements} We thank Marco Maculan for some useful discussions and comments. Thanks also to Stéphane Druel for his interest.

\subsubsection*{Conventions} A \emph{complex algebraic variety} is a separated reduced finite type $\bC$-scheme. One often makes no distinction between a complex algebraic variety and the associated complex analytic space.\\
A \emph{fibration} between two normal complex algebraic varieties is a proper surjective morphism $X \to Y$ with connected fibers.

\section{Generalities on special sets}

We gather some easy properties of special sets for future reference.

\begin{prop}\label{trivial inclusions}
Let $X$ be a proper complex algebraic variety. Then, 
\[\Sp_{ab}(X) \subset \Sp_{alg}(X) \text{ and }\Sp_{ab}(X) \subset \Sp_h(X).\]
\end{prop}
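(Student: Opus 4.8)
The plan is to prove the two inclusions separately, in each case starting from a non-constant rational map $f \colon A \dashrightarrow X$ from an abelian variety $A$ and analyzing the closure of its image $V := \overline{f(A)} \subset X$, which is an integral closed subvariety of $X$ of positive dimension (positive because $f$ is non-constant; integral because $A$ is irreducible).

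For the inclusion $\Sp_{ab}(X) \subset \Sp(X)$, I would argue that $V$ is not of general type. Resolving the indeterminacy of $f$, we obtain a smooth projective variety $\widetilde{A}$ equipped with a birational morphism $\widetilde{A} \to A$ and a dominant morphism $g \colon \widetilde{A} \to V'$, where $V' \to V$ is a resolution of singularities; by the Easy Addition / subadditivity of Kodaira dimension, $\kappa(V') \le \kappa(\widetilde{A}) + \dim V' - \dim \widetilde{A} \le \kappa(\widetilde{A})$. Since $\widetilde{A}$ is birational to the abelian variety $A$, it has Kodaira dimension $0$, hence $\kappa(V') \le 0 < \dim V'$, so $V$ is not of general type. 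Therefore every point of $f(A)$ — in particular every point in the image of the original rational map — lies on the positive-dimensional non-general-type subvariety $V$, giving $\Sp_{ab}(X) \subset \Sp(X)$. (One should be slightly careful that the definition of $\Sp_{ab}$ uses $f(A)$, the honest image of the rational map, not its closure; but $f(A)$ is contained in $V$, so this is harmless.)

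For the inclusion $\Sp_{ab}(X) \subset \Sp_h(X)$, I would use that $\bC$ surjects holomorphically onto $A$ via the universal cover $\bC^{g} \to A$ (where $g = \dim A$), composed with a suitable linear embedding $\bC \hookrightarrow \bC^{g}$ whose image has dense — in fact, for a generic line, equidistributed — image in $A$; composing with $f$ would give an entire curve, except that $f$ is only rational. So instead I would first pass to the smooth model: choose $\widetilde{A} \to A$ resolving the indeterminacy with $g \colon \widetilde{A} \to X$ a morphism, pick a complex line $L \subset \bC^{g}$ whose image $\pi(L) \subset A$ avoids the (codimension $\ge 2$, or at worst a proper closed subset) locus over which $\widetilde{A} \to A$ fails to be an isomorphism — this is possible because that locus is a proper closed subvariety and a very general line through a general point meets it in the expected (empty) dimension — lift $\pi|_L \colon L \to A$ to $L \to \widetilde{A}$, and compose with $g$ to obtain a holomorphic map $\bC \cong L \to X$. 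It is non-constant as long as the line is chosen so that its image is not contracted by $g$, which holds for a general line since $g$ is non-constant. Its image lies in $f(A) \subset V$, so $V \subset \Sp_h(X)$, and again this covers all of $\Sp_{ab}(X)$.

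The main obstacle is the bookkeeping around rational versus regular maps: the definition of $\Sp_{ab}(X)$ allows genuinely rational $f \colon A \dashrightarrow X$, so in both parts one must resolve indeterminacy and then either (i) invoke birational invariance of Kodaira dimension plus Iitaka's easy addition for the first inclusion, or (ii) arrange the entire curve to live on the smooth model and map down, choosing the line generically to dodge the indeterminacy/contracted loci for the second. Neither step is deep, but the genericity argument in (ii) — guaranteeing simultaneously that the line's image in $A$ avoids a fixed proper closed subset and is not contracted — is the one place a little care is needed; it follows from a standard dimension count since lines through a general point of $\bC^{g}$ sweep out a family of dimension $2g - 1 > \dim(\text{bad locus}) + (g-1)$ when the bad locus is proper.
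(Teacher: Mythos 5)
Your treatment of the second inclusion $\Sp_{ab}(X) \subset \Sp_h(X)$ is essentially correct and close in spirit to the paper's: both arguments resolve indeterminacy of the rational map and produce an entire curve through (almost) any point of the image. The paper additionally observes that the exceptional locus of the resolution is covered by rational curves; your genericity argument (choose a very general affine line in $\bC^g$ whose image in $A$ avoids the codimension-$\ge 2$ locus over which $\widetilde{A}\to A$ fails to be an isomorphism, then lift and push forward) is a fine substitute. The one hedge you should drop is the parenthetical ``or at worst a proper closed subset'': if the bad locus had codimension one in $A$, a complex line would generically \emph{meet} it (expected dimension $1-1=0$, not $-1$), so the dimension count genuinely needs the bad locus to have codimension $\ge 2$, which fortunately it does because $A$ is smooth.

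The first inclusion $\Sp_{ab}(X)\subset\Sp(X)$, however, has a real gap. The inequality
$\kappa(V') \le \kappa(\widetilde{A}) + \dim V' - \dim \widetilde{A}$
for a dominant morphism $g\colon \widetilde{A}\to V'$ is not a theorem, and the phrase ``Easy Addition'' does not cover it. Iitaka's easy addition for a fibre space $g\colon X\to Y$ with general fibre $F$ reads $\kappa(X)\le \kappa(F)+\dim Y$; it bounds $\kappa(X)$ from above and gives no upper bound at all on $\kappa(Y)$. The inequality you invoke is in fact false in general: take $X=C\times\bP^1$ with $C$ a curve of genus $\ge 2$, $Y=C$, $g$ the projection; then $\kappa(X)=-\infty$, $\kappa(Y)=1$, $\dim Y-\dim X=-1$, and the claimed bound would read $1\le -\infty$. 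What you actually need is the \emph{hard} direction, namely the Iitaka conjecture $C_{n,m}$ ($\kappa(X)\ge\kappa(F)+\kappa(Y)$), together with the fact that the general fibre $F$, being (birational to) a subvariety of an abelian variety, satisfies $\kappa(F)\ge 0$. Neither ingredient is elementary: $C_{n,m}$ over a base of general type is Viehweg's theorem, and $\kappa(F)\ge 0$ is Ueno's structure result for subvarieties of abelian varieties. This is exactly why the paper cites Viehweg (\emph{Corollary IV} of \cite{Viehweg83}) at this step rather than attempting a Kodaira-dimension dimension count; the statement that a positive-dimensional image of an abelian variety under a rational map is never of general type is a genuine theorem, not a formal inequality.
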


\begin{proof}
The inclusion $\Sp_{ab}(X) \subset \Sp_{alg}(X)$ follows from the fact that the image of a non-constant rational map $A \dasharrow X$ from an abelian variety $A$ is not of general type (this is for example an easy consequence of a special case of Iitaka conjecture proved by Viehweg, cf. \cite[Corollary IV]{Viehweg83}; this can also be checked more directly as a consequence of the triviality of the cotangent bundle of $A$).
On the other hand, for every rational map $Y \dasharrow X$ with $Y$ smooth projective, there exists a sequence of blow-ups $Y^\prime \to Y$ along smooth subvarieties such that the composite rational map $Y^\prime \to Y \dasharrow X$ is defined everywhere. Note that the exceptional locus of $Y^\prime \to Y$ is covered by rational curves. The inclusion $\Sp_{ab}(X) \subset \Sp_h(X)$ follows, since every abelian variety is covered by entire curves.
\end{proof}

\begin{prop}\label{Special sets and finite étale cover}
Let $f \colon X \to Y$ be a finite étale cover between proper complex algebraic varieties. Then, for any $\ast \in \{ alg, ab, h \}$,
\[ \Sp_\ast(X) = f^{-1}\left( \Sp_\ast(Y) \right) . \]
\end{prop}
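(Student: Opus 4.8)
The plan is to prove both inclusions for each $\ast \in \{\varnothing, ab, h\}$, treating the easy inclusion $\Sp_\ast(X) \subset f^{-1}(\Sp_\ast(Y))$ first and then the reverse. For the forward inclusion: if $W \subset X$ is a positive-dimensional integral closed subvariety that is not of general type (resp. the image of a non-constant rational map from an abelian variety, resp. an entire curve), then $f(W) \subset Y$ is again such a subvariety. Indeed $f|_W : W \to f(W)$ is a finite (hence generically finite, surjective) morphism, so $W$ not of general type forces $f(W)$ not of general type by the behaviour of Kodaira dimension under generically finite surjective maps; composing a rational map $A \dashrightarrow X$ with $f$ handles the $ab$ case; and composing an entire curve $\bC \to X$ with $f$ handles the $h$ case. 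Since $W \subset f^{-1}(f(W))$, we get $\Sp_\ast(X) \subset f^{-1}(\Sp_\ast(Y))$.

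For the reverse inclusion $f^{-1}(\Sp_\ast(Y)) \subset \Sp_\ast(X)$, the key point is that $f$ is étale, so subvarieties and curves downstairs can be lifted after passing to connected components of their preimages. Suppose $V \subset Y$ is a positive-dimensional integral closed subvariety witnessing a point of $\Sp_\ast(Y)$, and let $x \in f^{-1}(V)$. Let $\widetilde{V}$ be the connected component of $f^{-1}(V)$ (with reduced structure) containing $x$; then $f|_{\widetilde V} : \widetilde V \to V$ is again finite étale, in particular generically finite and surjective. In the $\varnothing$ case, $V$ not of general type implies $\widetilde V$ not of general type: a finite étale cover of a variety that is not of general type is not of general type (Kodaira dimension is multiplicative-invariant under étale covers, so it cannot jump to maximal). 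Hence $x \in \widetilde V \subset \Sp(X)$. In the $h$ case, an entire curve $\bC \to Y$ through $f(x)$ lifts along the covering $f$ to an entire curve $\bC \to X$ through $x$ by the lifting property of étale (indeed topological covering) maps, since $\bC$ is simply connected; this gives $x \in \Sp_h(X)$.

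The $ab$ case for the reverse inclusion is the one requiring a little more care, and is the step I expect to be the main (minor) obstacle. Given a non-constant rational map $g : A \dashrightarrow Y$ from an abelian variety with $f(x)$ in the closure of its image, I would first resolve it to a morphism $g' : A' \to Y$ from a smooth projective variety $A'$ obtained from $A$ by blow-ups, but to stay in the abelian-variety world it is cleaner to argue as follows: after translating we may assume $g$ is defined at $0 \in A$ and $g(0) = f(x)$; form the fibre product and take the connected component $A''$ of $(A \times_Y X)^{\mathrm{normalization}}$ dominating $A$ through the point $(0,x)$, so that $A'' \to A$ is finite étale over the open locus where $g$ is defined. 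A finite étale cover of (an open subset of) an abelian variety is an open subset of an abelian variety (or more precisely, the étale cover extends to an isogeny-type cover); concretely, normalizing the cover of $A$ in its function field yields an abelian variety $\widetilde A$ with an isogeny-plus-translation structure, and we obtain a non-constant rational map $\widetilde A \dashrightarrow X$ whose image contains $x$. Hence $x \in \Sp_{ab}(X)$. Combining the two inclusions for all three values of $\ast$ completes the proof. $\qed$
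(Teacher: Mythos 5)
The paper states this proposition with no proof at all, treating it as standard, so there is no argument to compare yours against; your proposal supplies the verification the author omitted, and it is correct. The ingredients you invoke are the right ones: for the forward inclusions, $\kappa$ can only increase along a generically finite dominant map of projective varieties (giving the $\varnothing$ case) and composing with the finite map $f$ preserves non-constancy (giving $ab$ and $h$); for the reverse inclusions, $\kappa$ is preserved under finite \'etale covers, entire curves lift through a covering since $\bC$ is simply connected, and a finite \'etale cover of a codimension-$\geq 2$-dense open of an abelian variety extends by purity of the branch locus to a finite \'etale cover, which is again an abelian variety after a choice of origin.

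One small imprecision worth flagging in the $\varnothing$ case of the reverse inclusion: the connected component $\widetilde{V}$ of $f^{-1}(V)$ need not be irreducible when $V$ is not normal (a connected degree-two \'etale cover of a nodal cubic is a cycle of two $\bP^1$'s). This is harmless, because every irreducible component of $f^{-1}(V)$ is finite and generically \'etale over $V$ and hence has the same Kodaira dimension as $V$; but since $\Sp(X)$ is defined via \emph{integral} closed subvarieties, you should pass explicitly to the irreducible component of $f^{-1}(V)$ through $x$ rather than to the connected component. In the $ab$ case this issue does not arise, since the base of your \'etale cover is an open subset of the smooth variety $A$, hence normal, so a connected \'etale cover of it is automatically irreducible.
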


\begin{prop}\label{Special sets and birational morphism}
Let $f \colon X \to Y$ be a birational morphism between irreducible proper complex algebraic varieties. Then, for any $\ast \in \{ alg, ab, h \}$, $\Sp_\ast(X)$ is Zariski-dense in $X$ if and only if $\Sp_\ast(Y)$ is Zariski-dense in $Y$.
\end{prop}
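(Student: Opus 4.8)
The plan is to exploit that a birational morphism $f\colon X\to Y$ between irreducible projective varieties is an isomorphism over a dense Zariski-open $U\subset Y$, and then handle the (lower-dimensional) exceptional loci separately for each flavour $\ast\in\{\varnothing,ab,h\}$. Write $E\subset X$ for the exceptional locus and $Z=f(E)\subset Y$ for its image, a proper closed subset, and set $V=f^{-1}(U)$, so $f\colon V\xrightarrow{\sim}U$.

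First I would prove the implication ``$\Sp_\ast(Y)$ Zariski-dense $\Rightarrow$ $\Sp_\ast(X)$ Zariski-dense''. Given an integral subvariety $W\subset Y$ witnessing a point of $\Sp_\ast(Y)$ — not of general type, or dominated by an abelian variety, or an entire curve — that meets $U$, its strict transform $\widetilde W\subset X$ is birational to $W$ and hence again witnesses a point of $\Sp_\ast(X)$: for $\ast=\varnothing$ this uses birational invariance of being of general type; for $\ast=ab$ one precomposes the rational map $A\dashrightarrow W$ with $\widetilde W\dashrightarrow W$ inverted (rational maps to $X$ lift since $f$ is birational); for $\ast=h$ one uses that an entire curve $\bC\to Y$ not contained in $Z$ lifts through the birational $f$ (indeed $f$ is an isomorphism near the generic point of the curve, and one extends over the finitely many bad points using that $X$ is proper, possibly after noting the lift lands in the closure). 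Since $\Sp_\ast(Y)\cap U$ is still dense in $Y$ when $\Sp_\ast(Y)$ is dense (as $Z$ is a proper closed subset and $\Sp_\ast(Y)$ is a union of subvarieties, a dense such union must meet every open set — more carefully, if $\Sp_\ast(Y)\subset Z$ then it is not dense), the lifted witnesses show $\Sp_\ast(X)$ contains a dense subset of $V$, hence is dense in $X$.

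For the converse ``$\Sp_\ast(X)$ dense $\Rightarrow$ $\Sp_\ast(Y)$ dense'', I would push subvarieties forward under $f$. A subvariety $W'\subset X$ witnessing $\Sp_\ast(X)$ and meeting $V$ has image $f(W')\subset Y$ which is birational to $W'$ (as $f|_V$ is an isomorphism and $W'\cap V$ is dense in $W'$), so $f(W')$ witnesses $\Sp_\ast(Y)$ by the same three birational-invariance arguments, now run in the easy direction (pushforward of a non-constant holomorphic/rational map is again one). As before, if $\Sp_\ast(X)$ is dense in $X$ it cannot be contained in the proper closed subset $E$, so it meets $V$ in a dense subset, and the images fill a dense subset of $U$, whence $\Sp_\ast(Y)$ is dense in $Y$.

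The main obstacle is the entire-curve case $\ast=h$ in the forward direction: lifting a holomorphic map $g\colon\bC\to Y$ through the birational morphism $f$ when $g(\bC)$ is not contained in $Z$ but does pass through it at isolated parameter values. One must argue that the meromorphic lift, defined on $\bC$ minus a discrete set, extends to a holomorphic map $\bC\to X$ — using properness of $f$ and that $X$ is projective, so there are no essential obstructions, together with the Riemann-type extension across the isolated points. I would phrase this via the valuative criterion of properness applied to the generic point of the disc, or alternatively reduce to Proposition \ref{trivial inclusions}'s blow-up argument; the remaining flavours $\ast\in\{\varnothing,ab\}$ are formal consequences of birational invariance and present no real difficulty.
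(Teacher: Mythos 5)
Your proof is correct and follows essentially the same route as the paper, whose entire argument is the one-line observation that $f\left(\Sp_\ast(X)\right)\setminus Z=\Sp_\ast(Y)\setminus Z$ for $Z\subset Y$ the proper closed subset over which $f$ fails to be an isomorphism. Your extra care on the one genuinely non-formal point --- lifting an entire curve not contained in $Z$ through the birational morphism, by noting the composite with $f^{-1}$ is meromorphic on $\bC$ and hence extends holomorphically into the projective variety $X$ --- correctly fills in what the paper's proof leaves implicit.
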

\begin{proof}
Let $Z \subsetneq Y$ be a closed subvariety such that $f$ is an isomorphism over $Y \backslash Z$. Then $ f \left( \Sp_\ast(X) \right) \backslash Z = \Sp_\ast(Y)  \backslash Z $, and the result follows.
\end{proof}

\begin{defn} 
Let $X \to Y$ be a proper morphism between complex algebraic varieties. For any $\ast \in \{ alg, ab, h \}$, we let
\[ \Sp_\ast(X/Y) := \bigcup_{y \in Y} \Sp_\ast(X_y) . \]
\end{defn}

\begin{prop}\label{Special sets and fibrations}
Let $f \colon X \to Y$ be a morphism between proper complex algebraic varieties. Then, for any $\ast \in \{ alg, ab, h \}$,
\[ \Sp_\ast(X) \subset f^{-1} \left( \Sp_\ast(Y) \right)  \cup \Sp_\ast(X/Y) . \]
\end{prop}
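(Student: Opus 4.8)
The plan is to treat the three values of $\ast$ separately. In each case it is enough to show that any object witnessing membership in $\Sp_\ast(X)$ --- an entire curve, a non-constant rational map from an abelian variety, or a positive-dimensional integral closed subvariety which is not of general type --- is either contained in a single fibre of $f$ (so that it contributes to $\Sp_\ast(X/Y)$) or has $f$-image contained in $\Sp_\ast(Y)$ (so that it lies in $f^{-1}(\Sp_\ast(Y))$).

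The cases $\ast=h$ and $\ast=ab$ are formal. Given a non-constant holomorphic map $\gamma\colon\bC\to X$, consider $f\circ\gamma$: if it is constant then $\gamma(\bC)$ lies in some fibre $X_y$, hence $\gamma(\bC)\subseteq\Sp_h(X_y)\subseteq\Sp_h(X/Y)$; if it is non-constant it is an entire curve in $Y$, so $\gamma(\bC)\subseteq f^{-1}(\Sp_h(Y))$. Similarly, given a non-constant rational map $g\colon A\dasharrow X$ from an abelian variety, resolve its indeterminacy so that its image $\operatorname{Im}(g)$ is the image of an actual morphism; if $f\circ g$ is constant then $\operatorname{Im}(g)$ lies in a fibre and contributes to $\Sp_{ab}(X/Y)$, and otherwise $f\circ g\colon A\dasharrow Y$ is a non-constant rational map from an abelian variety with $f(\operatorname{Im}(g))=\operatorname{Im}(f\circ g)\subseteq\Sp_{ab}(Y)$ (using that $f$ is closed), so $\operatorname{Im}(g)\subseteq f^{-1}(\Sp_{ab}(Y))$.

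For $\ast=\varnothing$ I would argue by induction on $\dim W$, where $W\subseteq X$ is a positive-dimensional integral closed subvariety which is not of general type. Set $Z:=f(W)$, a closed integral subvariety of $Y$. If $\dim Z=0$ then $W$ lies in a fibre, hence $W\subseteq\Sp(X/Y)$; if $Z$ is not of general type then $Z\subseteq\Sp(Y)$ and $W\subseteq f^{-1}(Z)\subseteq f^{-1}(\Sp(Y))$. Otherwise $Z$ is of general type with $\dim Z\geq 1$, and then $\dim W>\dim Z$, for otherwise $W\to Z$ would be generically finite and $W$ --- being generically finite over a variety of general type --- would be of general type, a contradiction. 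The crucial external ingredient is now Viehweg's subadditivity theorem for fibrations over a base of general type (the same special case of Iitaka's conjecture used in Proposition \ref{trivial inclusions}, \cite{Viehweg83}), which applied to $W\to Z$ gives $\kappa(W)\geq\kappa(F)+\dim Z$ for the general fibre $F$, whence $\kappa(F)\leq\kappa(W)-\dim Z<\dim W-\dim Z=\dim F$ and $F$ is not of general type.

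To exploit this, I would fix $z\in Z$, choose a general complete-intersection curve $C\subseteq Z$ through $z$ (taking $C=Z$ if $\dim Z=1$), and put $W_C:=W\cap f^{-1}(C)$, an integral subvariety of dimension $\dim F+1$ with $W\cap f^{-1}(z)\subseteq W_C$. After replacing $C$ by the curve in the Stein factorization of $W_C\to C$ --- still of general type, being finite over a curve of general type --- the easy addition theorem gives $\kappa(W_C)\leq\kappa(F)+1\leq\dim W_C-1$, so $W_C$ is not of general type. If $\dim Z\geq 2$ then $\dim W_C<\dim W$, so the induction hypothesis applies to $W_C$ and yields $W\cap f^{-1}(z)\subseteq f^{-1}(\Sp(Y))\cup\Sp(X/Y)$; letting $z$ range over $Z$ finishes this case. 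The remaining case $\dim Z=1$ --- a fibration $W\to Z$ onto a curve of general type with $W$ not of general type --- is where the main difficulty sits: one must show that for every $z\in Z$ the reduced fibre $(W\cap f^{-1}(z))_{\mathrm{red}}$ is covered by positive-dimensional subvarieties that are not of general type (so that it lies in $\Sp(X_z)\subseteq\Sp(X/Y)$), equivalently that no fibre component is of general type. This last point --- rather than the formal bookkeeping --- carries the substantive content of the proposition and is where a careful use of Viehweg's theorem together with the structure of the fibration is needed; I expect it to be the real obstacle to a clean write-up.
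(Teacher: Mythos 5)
Your treatment of the cases $\ast = h$ and $\ast = ab$ is correct, and your reduction in the case $\ast = \varnothing$ (split according to whether $Z = f(W)$ is a point, is not of general type, or is of general type of positive dimension; then Viehweg's $C^+_{n,m}$ over a base of general type to show that the general fibre of $W \to Z$ is positive-dimensional and not of general type) is the right strategy: the paper states this proposition without proof, but it runs exactly this Viehweg-plus-degeneration argument in the proof of Theorem \ref{Special sets of not general type varieties}. The step you leave open is, however, a genuine gap as written: you still must show that \emph{every} irreducible component of \emph{every} fibre of $W \to Z$, not just the general one, fails to be of general type, since a point lying on a general-type component of a special fibre would a priori belong neither to $\Sp_\ast(X/Y)$ nor to $f^{-1}(\Sp(Y))$ (the latter because $Z$ is of general type).

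The missing ingredient is precisely Theorem \ref{Nakayama} (Nakayama, Theorem VI.4.3): for a projective surjective morphism with connected fibres from a normal variety onto a smooth curve, if some irreducible component of some fibre is of general type, then the general fibre is of general type. Its contrapositive, combined with your Viehweg estimate that the general fibre is not of general type, rules out general-type components in all special fibres at once; each such component is positive-dimensional by upper semicontinuity of fibre dimension, hence lies in $\Sp(X_y) \subset \Sp(X/Y)$. To apply it you normalize $W$, take the Stein factorization, and, when $\dim Z \geq 2$, restrict to a general complete-intersection curve $C$ through the given point $z$, much as you propose; but note that for this you do not need $W_C$ to be irreducible, nor not of general type --- only that the general fibre of $W_C \to C$ agrees with the general fibre of $W \to Z$ --- so both the Bertini irreducibility issue for linear systems with assigned base points and your induction on $\dim Z$ can be dispensed with. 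The paper quotes and uses Nakayama's theorem in exactly this way immediately after Theorem \ref{Special sets of not general type varieties}; citing it closes your argument.
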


\begin{prop}\label{product-special-sets}
Let $n$ be a positive integer and, for $1 \leq i \leq n$, let $S_i$ be a projective complex algebraic variety. For every $i$, let $p_i \colon \prod_{1 \leq k \leq n} S_k \rightarrow S_i$ be the projection on the $i$-th factor. Then, for any $* \in  \{ alg, ab, h\} $,
\[  \Sp_*(\prod_{1 \leq i \leq n} S_i) \subset \bigcup_{1 \leq i \leq n} p_i^{-1} \left(\Sp_*(S_i) \right).\]
\end{prop}

\begin{proof}
The proofs for $* \in  \{ ab, h\} $ are straightforward. Let us prove the case $* = alg$. By induction, the general case follows from the case where $n = 2$. Let $X \subset S_1 \times S_2$ be a closed integral subvariety. Assume that $X \not \subset p_1^{-1}\left(\Sp_{alg}(S_1) \right)$, so that $Y := p_1(X)$ is of general type or has dimension $0$. If moreover $X \not \subset p_2^{-1}\left(\Sp_{alg}(S_2) \right)$, then the morphism $X \to Y$ admits at least one fiber of general type. Therefore, thanks to Theorem \ref{Nakayama} below (see also \cite[Theorem 3.9]{Bruni-relative-Lang}), the general fiber of the Stein factorization of $X \to Y$ is of general type. Since $Y$ is of general type, we get that $X$ is of general type, cf. \cite[Corollary IV]{Viehweg83}).
\end{proof}

\begin{thm}[Nakayama, {\cite[Theorem VI.4.3]{Nakayama}}]\label{Nakayama}
Let $X \to S$ be a projective surjective morphism with connected fibres from a normal complex analytic variety onto a smooth curve and $0 \in S$. Let $X_0= \cup_{i\in I} \Gamma_i$ the decomposition into irreducible components. If there is at least one irreducible component $\Gamma_j$ which is of general type (i.e. its desingularisation is such), then, for $s \in S$ general, the fiber $X_s$ is of general type.
\end{thm}

\begin{prop}\label{lemma-special-sets}
Let $I$ be a finite set and, for $i \in I$, let $q_i \colon X \to S_i$ be a surjective morphism between projective irreducible complex algebraic varieties. Suppose that the induced morphism $q \colon X \to \prod_{i \in I} S_i$ is finite and let $* \in  \{alg, ab, h\} $. Then,
\[ \forall i \in I,  \overline{\Sp_*(S_i)} \neq S_i \implies \overline{\Sp_*(X)} \neq X.\]
\end{prop}

\begin{proof}
It follows from Proposition \ref{Special sets and fibrations} and Proposition \ref{product-special-sets} that
\[ \Sp_\ast(X) \subset  q^{-1} \left( \Sp_*(\prod_{i \in I} S_i) \right) \subset \bigcup_{i \in I} q_i^{-1} \left(\Sp_*(S_i) \right),\]
so that $\overline{\Sp_\ast(X)} \subset \bigcup_{i \in I} \, q_i^{-1} (\overline{\Sp_\ast(S_i)} )$.
Since the $q_i$'s are surjective, it follows that $\overline{\Sp_\ast(X)} \neq X$.
\end{proof}

\section{Generalities on large local systems}\label{large local systems}

\subsection{Monodromy groups}

Let $k$ be a field. Let $\cL$ be a local system in $k$-vector spaces on a connected complex analytic space $X$. For $x \in X$, let $\rho_x \colon \pi_1(X, x) \to \GL(\cL_x)$ be the corresponding monodromy representation. The monodromy group of $\cL$ (with respect to the base-point $x$) is by definition the image of $\rho_x$; its Zariski-closure in $\GL(\cL_x)$ is called the algebraic monodromy group. Different points in $X$ yield isomorphic groups, respectively $k$-algebraic groups.

\subsection{Equivalent definitions of largeness}

\begin{defn}
A local system $\cL$ on a proper complex algebraic variety $X$ is called large if for every irreducible closed subvariety $Z \hookrightarrow X$, the pull-back of $\cL$ to the normalization of $Z$ is not isotrivial.
\end{defn}

If $f \colon X \to Y$ is a dominant morphism between two irreducible normal complex algebraic varieties, the image of the induced morphism of groups $f_\ast \colon \pi_1(X) \to \pi_1(Y)$ has finite index in $\pi_1(Y)$ \cite{Campana91}. Therefore, a local system $\cL$ on a proper complex algebraic variety $X$ is large if and only if for any non-constant morphism $f \colon Y \to X$ from an irreducible proper complex algebraic variety $Y$ the local system $f^{-1} \cL$ is not isotrivial. With this observation, the following results are immediate.

\begin{prop}\label{pull-back large}
Let $f \colon Y \to X$ be a finite morphism between proper varieties and $\cL$ a large local system on $X$. Then, the local system $f^{-1} \cL$ on $Y$ is large.
\end{prop}

\begin{prop}\label{large local systems and finite étale cover}
Let $f \colon Y \to X$ be a finite étale morphism between proper complex algebraic varieties. Let $\cL$ be a complex local system on $X$. Then, $\cL$ is large if and only if the pull-back local system $f^{-1} \cL$ on $Y$ is large.
\end{prop}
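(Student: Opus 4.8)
The plan is as follows. One implication is immediate: a finite étale morphism is in particular finite, so if $\cL$ is large then $f^{-1}\cL$ is large by Proposition \ref{pull-back large}. The content is therefore the reverse implication, and for this I would work with the characterization of largeness recalled just above, namely that a complex local system $\cM$ on a projective complex algebraic variety $V$ is large if and only if for every non-constant morphism $g \colon W \to V$ from an irreducible normal projective complex algebraic variety $W$, the local system $g^{-1}\cM$ is not isotrivial.

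So, assume $f^{-1}\cL$ is large and let $g \colon W \to X$ be a non-constant morphism with $W$ irreducible normal projective; I must show $g^{-1}\cL$ is not isotrivial. The key construction is the fibre product $W' := W \times_X Y$, with projections $p \colon W' \to W$ and $q \colon W' \to Y$. Since $f$ is finite étale, so is $p$; as $W$ is normal, $W'$ is normal, and it is projective (being finite over $W$). Pick a connected component $W'_0$ of $W'$: being connected and normal it is irreducible, and $p|_{W'_0} \colon W'_0 \to W$ is again finite étale, in particular surjective. Consequently the image of $W'_0$ in $X$ (via $g \circ p$, equivalently via $f \circ q$) equals the image of $g$, which is positive-dimensional because $g$ is non-constant; hence $q|_{W'_0} \colon W'_0 \to Y$ is non-constant. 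Applying the largeness of $f^{-1}\cL$ to this morphism, $(q|_{W'_0})^{-1}(f^{-1}\cL)$ is not isotrivial. But $(q|_{W'_0})^{-1}(f^{-1}\cL) = (f \circ q|_{W'_0})^{-1}\cL = (g \circ p|_{W'_0})^{-1}\cL = (p|_{W'_0})^{-1}(g^{-1}\cL)$. Were $g^{-1}\cL$ isotrivial, its pull-back along the finite étale map $p|_{W'_0}$ would be isotrivial as well (compose a trivialising finite étale cover of $W$ with $p|_{W'_0}$), contradicting the previous line. Hence $g^{-1}\cL$ is not isotrivial, and $\cL$ is large.

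I do not expect a genuine obstacle here: the argument is a routine fibre-product manipulation. The only points that require a small verification are that a connected component of a normal complex algebraic variety is irreducible (so that $W'_0$ is an admissible test variety), that $q|_{W'_0}$ is non-constant — which is where one uses that $p|_{W'_0}$ is surjective onto $W$ — and that isotriviality is insensitive to pulling back along a finite étale morphism, which follows by composing trivialising covers. Note that only Proposition \ref{pull-back large} and the Campana-type characterization enter; Proposition \ref{Special sets and finite étale cover} is not needed.
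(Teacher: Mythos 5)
Your proposal is correct and fills in exactly the argument that the paper treats as ``immediate'' from the characterization of largeness via non-constant morphisms $W \to X$ from irreducible normal projective varieties: the forward direction is Proposition \ref{pull-back large}, and the reverse direction is the routine base change along $f$ to a connected component of $W \times_X Y$, together with the observation that isotriviality descends along a finite étale cover. This is the same route the paper intends; the paper simply does not spell it out.
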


The following result is useful to prove that a local system is large.

\begin{prop}[compare with {\cite[Proposition 2.12]{Kollar-Shafarevich}}]\label{covering associated to a large local system}
Consider a local system $\cL$ on a connected proper complex algebraic variety $X$, and denote by $\tilde{X}^\cL \to X$ the associated connected covering space. Then the local system $\cL$ is large if and only if the complex analytic space $\tilde{X}^\cL$ does not contain any positive dimensional compact complex subspaces.
\end{prop}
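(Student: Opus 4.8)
The plan is to rephrase the statement as: the local system $\cL$ is \emph{not} large if and only if $X^\cL$ contains a positive-dimensional compact complex subspace. Two facts will be used repeatedly. First, by construction the covering $\pi : X^\cL \to X$ corresponds to the kernel of the monodromy representation of $\cL$, so the pull-back local system $\pi^{-1}\cL$ is trivial. Second, a local system on a connected complex space whose monodromy group is finite is isotrivial, as one sees by passing to the connected finite étale cover associated to the kernel of its monodromy representation. I will also use without further comment that a proper holomorphic map with discrete fibres is finite, that by Remmert's proper mapping theorem the image of such a map is a closed complex subspace of the same dimension as the source, and that a reduced compact complex space admitting a finite morphism to a projective variety is projective.

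Assume first that $\cL$ is not large, and choose a positive-dimensional integral closed subvariety $Z \hookrightarrow X$ such that the pull-back $\cL_{\tilde Z}$ of $\cL$ to the normalization $\nu : \tilde Z \to Z$ is isotrivial. By definition there is a connected finite étale cover $\mu : \tilde Z' \to \tilde Z$ on which $\cL_{\tilde Z}$ becomes trivial; then $\tilde Z'$ is a connected compact complex space of dimension $\dim Z \ge 1$, and the composite $g : \tilde Z' \xrightarrow{\mu} \tilde Z \xrightarrow{\nu} Z \hookrightarrow X$ satisfies $g^{-1}\cL$ trivial. Hence the monodromy representation of $\cL$ kills the image of $g_\ast : \pi_1(\tilde Z') \to \pi_1(X)$, so by the lifting criterion for covering spaces $g$ admits a holomorphic lift $\tilde g : \tilde Z' \to X^\cL$ along $\pi$. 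Since $\pi \circ \tilde g = g$ is finite and $\pi$ has discrete fibres, $\tilde g$ has discrete fibres; as $\tilde Z'$ is compact, $\tilde g(\tilde Z')$ is a compact complex subspace of $X^\cL$ of dimension $\dim \tilde Z' = \dim Z \ge 1$.

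Conversely, assume $X^\cL$ contains a positive-dimensional compact complex subspace; replacing it by an irreducible component of its reduction, we may take it to be a reduced irreducible compact complex subspace $T \subset X^\cL$ with $\dim T \ge 1$, and let $q : \hat T \to T$ be its normalization. The restriction $\pi|_T : T \to X$ is proper with discrete fibres, hence finite, so $\pi \circ q : \hat T \to X$ is a finite morphism; since $X$ is projective, $\hat T$ is an irreducible normal projective variety, the image $Z := \pi(T)$ is an integral closed subvariety of $X$ with $\dim Z = \dim T \ge 1$, and the resulting finite surjective morphism $\hat T \to Z$ factors through the normalization $\nu : \tilde Z \to Z$ as a finite surjection $h : \hat T \to \tilde Z$; write $\cL_{\tilde Z}$ for the pull-back of $\cL$ to $\tilde Z$. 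The two morphisms $\hat T \xrightarrow{q} T \hookrightarrow X^\cL \xrightarrow{\pi} X$ and $\hat T \xrightarrow{h} \tilde Z \xrightarrow{\nu} Z \hookrightarrow X$ coincide (both equal the morphism $\hat T \to Z$ followed by the inclusion $Z \hookrightarrow X$), so pulling $\cL$ back along them and using that $\pi^{-1}\cL$ is trivial shows that $h^{-1}(\cL_{\tilde Z})$ is trivial on $\hat T$. Now $h$ is a dominant morphism between the irreducible normal complex algebraic varieties $\hat T$ and $\tilde Z$, so by \cite{Campana91} the image of $h_\ast : \pi_1(\hat T) \to \pi_1(\tilde Z)$ has finite index; this image lies in the kernel of the monodromy representation of $\cL_{\tilde Z}$, so that kernel has finite index, whence $\cL_{\tilde Z}$ has finite monodromy group and is therefore isotrivial. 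Thus $\cL$ is not large.

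The only genuinely delicate point is in the second part: a compact complex subspace of $X^\cL$ is a priori merely analytic, and one must first promote its normalization to a projective---hence algebraic---variety before Campana's theorem on fundamental groups of algebraic varieties can be applied. The remaining ingredients (the lifting criterion for coverings, Remmert's proper mapping theorem, and the behaviour of dimension under finite maps) are routine.
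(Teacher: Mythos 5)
Your proof is correct and follows essentially the same approach as the paper (the Kollár-style correspondence between positive-dimensional compact subspaces of $X^\cL$ and positive-dimensional integral subvarieties of $X$ whose normalization carries isotrivial monodromy). The paper's version is terse and only sketches the direction from a compact subspace $Y\subset X^\cL$ to the subvariety $Z\subset X$, asserting that the monodromy group of $\cL|_{\bar Z}$ is identified with the (finite) Galois group of $\bar Y/\bar Z$; you instead deduce finiteness of the monodromy from the triviality of the pullback to $\hat T$ together with Campana's finite-index theorem, which is the same mechanism the paper already invokes just before Proposition~\ref{pull-back large}, and you make the converse direction (lifting via the covering-space criterion) explicit. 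Both arguments are sound; yours is a more complete write-up of the same idea.
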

\begin{proof} This is essentially the same proof as in {\cite[Proposition 2.12]{Kollar-Shafarevich}}. Let $Y \subset \tilde{X}^\cL$ be an irreducible compact complex subspace with normalization $\bar Y$. The induced holomorphic map $\bar Y \to X$ has discrete (hence finite) fibres. Let $Z \subset X$ be the image of $Y$ and $\bar Z$ denote its normalization. Then the monodromy group of $\cL_{|\bar Z}$ is isomorphic to the Galois group of $\bar Y \slash \bar Z$, in particular it is finite. This show that (up to deck transformations of $ \tilde{X}^\cL \to X$) there is a one-to-one correspondence between irreducible compact complex subspaces of $ \tilde{X}^\cL$ and irreducible  compact complex subspaces $Z \subset X$ such that the pull-back of $\cL$ to the normalization $\bar Z$ is isotrivial.
\end{proof}

\subsection{Shafarevich morphisms}

We will use in several places the existence of the Shafarevich morphism associated to a semisimple complex local system.
\begin{thm}\label{existence of Shafarevich morphism}
Let $\cL$ be a complex local system on a proper irreducible normal complex algebraic variety $X$. Assume that $\cL$ is semisimple, or equivalently that the algebraic monodromy group of $\cL$ is reductive.\\
Then, there exist a projective normal complex algebraic variety $\Sh_X^\cL$ and a fibration $\sh_X^\cL \colon X \rightarrow \Sh_X^\cL$, unique up to a unique isomorphism, with the following property: for any connected proper complex algebraic variety $Z$ and any morphism $f \colon Z \rightarrow X$, the composite map $\sh_X^\cL \circ f \colon Z \rightarrow \Sh_X^\cL$ is constant if and only if the local system
$f^{-1} \cL$ is isotrivial. \\
Moreover, if the monodromy group of $\cL$ is torsion-free, then there exists a (necessarily large) complex local system $\cM$ on $\Sh_X^\cL$ such that $\cL = \left(\sh_X^\cL \right)^{-1} \cM$.
\end{thm}

The map $\sh_X^\cL : X \rightarrow \Sh_X^\cL$ is called the Shafarevich morphism associated to $\cL$. Its existence for any semisimple complex local system $\cL$ is proved in \cite{Eyssidieux-reductive} for $X$ smooth projective (see also \cite{Brunebarbe-Shafarevich}). The existence of the Shafarevich morphism when $X$ is only normal proper is proved by applying the following result to a projective desingularization of $X$. The last part of Theorem \ref{existence of Shafarevich morphism} is \cite[Theorem 10.2]{Brunebarbe-Shafarevich}. 

\begin{prop}\label{Shafarevich: from smooth to normal}
Let $X^\prime \to X$ be a fibration between normal proper complex algebraic varieties. Let $\cL$ be a semisimple complex local system on $X$ and $\sh_{X^\prime}^{\cL^\prime} \colon X^\prime \rightarrow \Sh_{X^\prime}^{\cL^\prime}$ be the Shafarevich morphism associated to the (necessarily semisimple) local system $ \cL^\prime := \nu^{-1} \cL$. Then there is a (unique) factorization
\begin{align*}
\xymatrix{
X^\prime \ar[r] \ar[d] &  \Sh_{X^\prime}^{\cL^\prime} \\
X   \ar[ru]       &   }
\end{align*}
and the induced morphism $X \rightarrow \Sh_{X^\prime}^{\cL^\prime}$ is the Shafarevich morphism associated to $\cL$.
\end{prop}
\begin{proof}

Let $F$ be an irreducible component of the normalization of a fiber of $\nu$. Since the induced morphism $F \to X$ is constant, the restriction of $\cL$ to $F$ is trivial, therefore $F$ is mapped to a point by the composite map $F \to X^\prime \rightarrow \Sh_{X^\prime}^{\cL^\prime}$. Since the fibers of $\nu$ are connected and $X$ is normal, this shows that $\sh_{X^\prime}^{\cL^\prime} \colon X^\prime \rightarrow \Sh_{X^\prime}^{\cL^\prime}$ factors through a map $X \rightarrow \Sh_{X^\prime}^{\cL^\prime}$. The easy verification that this is the Shafarevich morphism associated to $\cL$ is left to the reader.
\end{proof}

\subsection{Variations of Hodge structure with discrete monodromy}

We recall the construction of the Shafarevich morphism associated to a complex local system that underlies a complex variation of Hodge structure with discrete monodromy.

\begin{prop}\label{Shafarevich map for PVHS}
Let $X$ be a connected normal projective complex algebraic variety and $\cL$ a complex local system on $X$. Assume that $\cL$ underlies a polarized complex variation of pure Hodge structure $(\cL, \cF^{\sbt}, h)$. Assume moreover that the monodromy group $\Gamma$ of $\cL$ is discrete in $\GL(\cL_x)$, so that the associated period map induces a holomorphic map $X \to \Gamma \backslash \cD$. Then, the analytification of the Shafarevich morphism associated to $\cL$ coincide with the Stein factorization of the proper holomorphic map $X \to \Gamma \backslash \cD$.
\end{prop}

\begin{proof}
See for example \cite[Proposition 3.5]{CCE} and its proof. Besides the algebraicity of the image of the period map, the key point is that every horizontal holomorphic map $W \to \cD$ from a compact complex manifold $W$ is constant, cf. \cite[Corollary 8.3]{Griffiths-Schmid}. 
\end{proof}

\begin{cor}
Same assumptions as in Proposition \ref{Shafarevich map for PVHS}. Then, the complex local system $\cL$ is large if and only if the period map $X \to \Gamma \backslash \cD$ associated to $(\cL, \cF, h)$ is finite.
\end{cor}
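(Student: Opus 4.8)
The plan is to deduce the Corollary from Proposition \ref{Shafarevitch map for PVHS} together with the defining property of the Shafarevitch morphism recalled in Theorem \ref{existence of Shafarevitch morphism}, which already characterizes largeness in terms of finiteness of $\sh_X^\cL$. So the real content is just to match the two descriptions of the Shafarevitch morphism.

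First I would recall that by Proposition \ref{Shafarevitch map for PVHS}, the Stein factorization $X \to \Sh_X^\cL \to \overline{\cD_X}$ of the proper holomorphic map $p \colon X \to \Gamma\backslash\cD$ (where $\overline{\cD_X}$ denotes the image, which is algebraic) \emph{is} the Shafarevitch morphism $\sh_X^\cL$. In particular $p$ factors as $\sh_X^\cL$ followed by a \emph{finite} morphism $\Sh_X^\cL \to \overline{\cD_X}$. Hence $p$ is finite if and only if $\sh_X^\cL$ is finite.

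Next, I would invoke the characterization of $\cL$ being large. By definition, $\cL$ is large if and only if for every integral closed subvariety $Z \hookrightarrow X$ the pull-back of $\cL$ to the normalization $\bar Z$ is not isotrivial. By the universal property in Theorem \ref{existence of Shafarevitch morphism} applied to the normalization map $\bar Z \to X$, the pull-back of $\cL$ to $\bar Z$ is isotrivial if and only if $\sh_X^\cL$ contracts $\bar Z$ (equivalently $Z$) to a point. Therefore $\cL$ is large if and only if $\sh_X^\cL$ does not contract any positive-dimensional subvariety, i.e. $\sh_X^\cL$ is quasi-finite; since it is also proper (being a fibration, in particular proper surjective), this is equivalent to $\sh_X^\cL$ being finite. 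Combining with the previous paragraph, $\cL$ is large if and only if the period map $X \to \Gamma\backslash\cD$ is finite.

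I do not expect a serious obstacle here; the only point requiring a little care is the equivalence ``$\sh_X^\cL$ contracts no positive-dimensional subvariety $\iff$ $\sh_X^\cL$ is finite'', which uses that $\sh_X^\cL$ is proper with connected fibres, so that a zero-dimensional fibre is a single reduced point and a quasi-finite proper morphism is finite. One should also note that ``isotrivial'' for the restriction to $\bar Z$ is exactly the condition $\sh_X^\cL|_{\bar Z}$ constant, which is built into the statement of Theorem \ref{existence of Shafarevitch morphism} (the morphism $\bar Z \to \Sh_X^\cL$ is constant iff the pull-back local system is isotrivial), so no extra argument is needed there.
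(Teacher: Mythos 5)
The paper states this corollary with no proof, so your job is really to reconstruct the intended argument, and you do so correctly: the equivalence ``$\cL$ large $\iff$ $\sh_X^\cL$ finite'' (via the universal property from Theorem \ref{existence of Shafarevitch morphism}, together with properness and connectedness of fibres) combined with the identification ``$\sh_X^\cL = $ Stein factorization of the period map'' from Proposition \ref{Shafarevitch map for PVHS} is exactly what the corollary is meant to record.

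One small point worth flagging: Proposition \ref{Shafarevitch map for PVHS} is stated under the standing hypothesis that $\cL$ is large, so strictly speaking you cannot quote it to prove the ``if'' direction (period map finite $\Rightarrow$ $\cL$ large), since largeness is then the conclusion rather than an input. There are two easy ways to close this. Either observe that the identification of $\sh_X^\cL$ with the Stein factorization of $p\colon X\to\Gamma\backslash\cD$ does not actually use largeness: the only thing needed is that for any connected normal projective $Z\to X$, the composite period map on $Z$ is constant iff $\cL|_{\bar Z}$ is isotrivial, and this follows from discreteness of $\Gamma$ together with the Griffiths--Schmid fact quoted in the proof of the proposition (trivial underlying local system on a compact space forces a trivial PVHS, and conversely a constant period map forces the monodromy into a compact stabilizer, hence a finite group). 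Or, more directly, prove the ``if'' direction without any Shafarevitch morphism at all: if $p$ is finite then for every positive-dimensional integral $Z\subset X$ the restriction of $p$ to $\bar Z$ is non-constant, and if $\cL|_{\bar Z}$ were isotrivial it would become trivial on a finite \'etale cover $\tilde Z\to\bar Z$, whence (by the Griffiths--Schmid fact) the period map would be constant on $\tilde Z$ and hence on $\bar Z$, a contradiction. Either fix is routine, and with it in place your proof is complete and is surely the one the author has in mind.
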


\begin{prop}\label{Special sets with PVHS}
Let $X$ be a normal proper complex variety and $\cL$ a large complex local system. Assume that $\cL$ underlies a polarized complex variation of pure Hodge structure with discrete monodromy. Then, 
\[ \Sp_{alg}(X) =  \Sp_{ab}(X) =  \Sp_{h}(X) = \varnothing. \]
\end{prop}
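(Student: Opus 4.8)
The plan is to deduce this from the already-recalled description of the Shafarevitch morphism for a polarized complex variation of pure Hodge structure with discrete monodromy (Proposition \ref{Shafarevitch map for PVHS}) together with the Griffiths--Schmid theorem on horizontal maps from compact complex manifolds. Since $\cL$ is large and its Shafarevitch morphism is the Stein factorization of the period map $X \to \Gamma \backslash \cD$, largeness is equivalent (by the Corollary following Proposition \ref{Shafarevitch map for PVHS}) to this period map being finite. So I would fix any positive-dimensional integral closed subvariety $Z \hookrightarrow X$, pass to its normalization $\bar Z$, and restrict the variation of Hodge structure to $\bar Z$; the period map of $\bar Z$ is the composition of $\bar Z \to X$ with the period map of $X$, hence is again finite and in particular non-constant. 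I then want to conclude that $\bar Z$ (equivalently $Z$) is of general type, contains no non-constant image of an abelian variety, and contains no entire curve; since $Z$ was arbitrary, all three special sets are empty.

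The key step for $\Sp_h$ and $\Sp_{ab}$ is the following: a compact complex variety supporting a polarized complex variation of pure Hodge structure whose period map is non-constant can carry no entire curve and is not dominated by an abelian variety. For the entire-curve statement, if $g : \bC \to \bar Z$ were non-constant, then after composing with the finite period map $\bar Z \to \Gamma\backslash\cD$ one would obtain a non-constant entire curve in $\Gamma\backslash\cD$ that is horizontal; lifting to $\cD$ and invoking the distance-decreasing property of the Griffiths--Schmid metric along horizontal directions (negatively curved in the horizontal directions, by \cite{Griffiths-Schmid}) forces such a curve to be constant, a contradiction. For the abelian statement, any non-constant rational map $A \dashrightarrow \bar Z$ from an abelian variety resolves to a morphism $A' \to \bar Z$ after blow-ups, composing with the finite period map gives a horizontal map $A' \to \Gamma\backslash\cD$; but a finite étale cover of $A'$ is bimeromorphic to $A$, which is covered by compact tori, and every horizontal holomorphic map from a compact complex manifold to $\cD$ is constant by \cite[Corollary 8.3]{Griffiths-Schmid}, so the period map would be constant on the image, contradicting finiteness. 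Hence $\Sp_h(X) = \Sp_{ab}(X) = \varnothing$.

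Finally, for $\Sp(X) = \varnothing$ I must show every positive-dimensional $Z$ is of general type. Here I would use that $\cL_{|\bar Z}$ underlies a polarized complex variation of pure Hodge structure with finite period map, so up to a finite étale cover the result of Zuo \cite{Zuo-Chern-hyperbolicity} (or Brunebarbe/Popa--Taji type statements on bigness of cotangent sheaves under a generically finite period map) applies and gives that a desingularization of $\bar Z$ is of general type; alternatively one can first pass to $\Sp_h(X) = \varnothing$ and invoke the known implication for subvarieties with no entire curves — but the cleanest route is the direct Hodge-theoretic input. Combining with Proposition \ref{Special sets and birational morphism} to move between $\bar Z$ and $Z$, and noting that the empty special sets are trivially closed and that $X$ itself is of general type (take $Z = X$), all three parts of Conjecture \ref{Lang conjecture} hold with $\Sp(X) = \Sp_{ab}(X) = \Sp_h(X) = \varnothing$.

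The main obstacle is the $\Sp(X) = \varnothing$ assertion: the entire-curve and abelian statements follow quite formally from the hyperbolicity of period domains in horizontal directions, but showing that \emph{every} positive-dimensional subvariety with a generically finite period map is of general type requires the (nontrivial) positivity results on the cotangent bundle of such varieties rather than a soft argument, so that is where the real work — and the appeal to external results — is concentrated.
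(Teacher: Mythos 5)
Your proposal is correct and follows essentially the same route as the paper: largeness gives finiteness of the period map, \cite[Corollary 9.4]{Griffiths-Schmid} (constancy of horizontal entire curves in $\cD$) gives $\Sp_h(X)=\varnothing$, and the external fact that a generically finite period map forces general type gives $\Sp(X)=\varnothing$. The only difference is that the paper dispatches $\Sp_{ab}(X)=\varnothing$ in one line via the inclusion $\Sp_{ab}(X)\subset\Sp_h(X)$ of Proposition \ref{trivial inclusions}, which is cleaner than your direct argument (where lifting the map $A'\to\Gamma\backslash\cD$ to $\cD$ would need a word about monodromy).
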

\begin{proof}
Thanks to \cite[Corollary 9.4]{Griffiths-Schmid}, every horizontal holomorphic map $\bC \to \cD$ is constant. This implies that $ \Sp_{h}(X) = \varnothing$. A fortiori, $ \Sp_{ab}(X) = \varnothing$ thanks to Lemma \ref{trivial inclusions}. Finally, $\Sp_{alg}(X) = \varnothing$ is a reformulation of the fact that any smooth proper complex variety that admits a polarized complex variation of pure Hodge structure with discrete monodromy and a generically finite period map is of general type, see e.g. \cite[Proposition 3.5]{CCE}.
\end{proof}

\subsection{Local systems with solvable monodromy}

\begin{prop}\label{Shafarevich in the solvable case}
Let $\cL$ be a large complex local system on a normal projective complex algebraic variety $X$. Let $\pi_1(X) \to \Gamma \subset \GL(n , \bC)$ be its monodromy representation. Assume that its monodromy group $\Gamma$ is solvable and the commutator subgroup $[\Gamma, \Gamma]$ is nilpotent. Then, $X$ admits a finite morphism to an abelian variety.
\end{prop}

\begin{proof}
Let us prove that the Albanese morphism $X \to \Alb(X)$ is finite. Let $f \colon Y \to X$ be a morphism from a smooth projective variety $Y$ such that the composite map $Y \to X \to \Alb(X)$ is constant, and let us prove that $f(Y)$ is necessarily a point. By assumption, the induced $\bQ$-linear map $f_\ast \colon \HH_1(Y, \bQ) \to \HH_1(X, \bQ)$ is zero, hence, up to replacing $Y$ by a finite étale cover, one can assume that the induced $\bZ$-linear map $f_\ast \colon \HH_1(Y, \bZ) \to \HH_1(X, \bZ)$ is zero. It follows that the canonical map $\HH_1(Y, \bZ) \to \Gamma^{\mathrm{ab}}$ is zero; equivalently the image of $\pi_1(Y)$ is contained in $[\Gamma, \Gamma]$ and therefore it is nilpotent. Applying the lemma below to the morphism $f$, we conclude that the image of $\pi_1(Y)$ in $\GL(n , \bC)$ is finite. Since $\cL$ is large, $f(Y)$ is necessarily a point.
\end{proof}

\begin{lem}[{see \cite[Lemme 4.6]{CCE} and the references therein}]\label{central serie}
Let $f \colon X \to Y$ be a morphism between two normal projective complex algebraic varieties. If the induced $\bQ$-linear map $f_\ast \colon \HH_1(X, \bQ) \to \HH_1(Y, \bQ)$ is zero, then the image of $\pi_1(X)$ in $\pi_1(Y) \slash C^k \pi_1(Y)$ is finite for every positive integer $k$. 
\end{lem}
Here, for every group $G$, $\{C^kG \}_{k \geq 0}$ denote the descending central series of $G$, defined by $C^0G = G$ and $C^{k+1} = [G, C^k G]$ for every integer $k \geq 0$. \\

The condition in Proposition \ref{Shafarevich in the solvable case} that $[\Gamma , \Gamma]$ is nilpotent is true up to replacing $X$ by a finite étale cover defined from a finite index subgroup of $\Gamma$ thanks to the following observation.

\begin{prop}\label{criterion derived group nilpotent}
If $\Gamma$ is a subgroup of $ \GL(n , \bC)$ contained in a connected solvable algebraic subgroup of $ \GL(n , \bC)$, then
$[\Gamma, \Gamma]$ is nilpotent.
\end{prop}
\begin{proof}
Let $\rG$ denote a connected solvable algebraic subgroup of $ \GL(n , \bC)$ that contains $\Gamma$. Its derived group $[\rG , \rG ]$ is a connected unipotent algebraic group, therefore $[\Gamma, \Gamma] \subset [\rG, \rG]$ is nilpotent.
\end{proof}



\subsection{A canonical decomposition}

\begin{prop}\label{fibration solvable/semisimple}
Let $X$ be a normal irreducible proper complex algebraic variety supporting a complex local system $\cL$. Then, there exist a finite étale cover $X^\prime \to X$ and a fibration $f \colon X^\prime \to Y$ onto a normal irreducible projective complex algebraic variety $Y$ such that:
\begin{itemize}
\item $Y$ admits a large complex local system with torsion-free monodromy and a semisimple algebraic monodromy group;
\item the monodromy of the restriction of $\cL$ to any fiber of $f$ is solvable.
\end{itemize} 
Moreover, if $\cL$ is large (resp. big), then the normalization of every fiber of $f$ admits a finite morphism to an abelian variety (resp. every generic fiber of $f$ admits a morphism to an abelian variety which is generically finite onto its image).
\end{prop}
\begin{proof}
Consider the monodromy representation $\pi_1(X) \to \rG(\bC)$ of $\cL$,  where $\rG$ is the algebraic monodromy group of $\cL$. Up to replacing $X$ with a finite étale cover, one can assume that $\rG$ is connected. Let $\rN$ be the solvable radical of $\rG$, so that $\rN$ is a connected solvable complex algebraic group. Let $\rH$ be the quotient of $\rG$ by $\rN$, so that $\rH$ is a connected semisimple complex algebraic group. The induced representation $\pi_1(X) \to \rH(\bC)$ has a Zariski-dense image, and replacing $X$ with a finite étale cover, one can assume that it has torsion-free image thanks to Selberg lemma. \\
Let $f \colon X \to Y$ denote the Shafarevich morphism associated to $\pi_1(X) \to \rH(\bC)$, cf. Theorem \ref{existence of Shafarevich morphism}. In particular, $Y$ is a normal irreducible projective complex algebraic variety and $f$ is surjective with connected fibres.
Since the representation $\pi_1(X) \to \rH(\bC)$ has torsion-free image, it factors through the homomorphism $\pi_1(X) \to \pi_1(Y)$, cf. Theorem \ref{existence of Shafarevich morphism}. The induced homomorphism $\pi_1(Y) \to \rH(\bC)$ corresponds to a large complex local system with a semisimple algebraic monodromy group.\\
Let $F$ be a (necessarily connected) fiber of $f$. Since the induced morphism $F \to Y$ is constant, the composite homomorphism $\pi_1(F) \to \pi_1(X) \to \rH(\bC)$ has finite image. But the image of $\pi_1(X) \to \rH(\bC)$ is torsion-free, hence the image of $\pi_1(F) \to \rH(\bC)$ is in fact trivial. Therefore it is contained in $\rN(\bC)$, from what it follows that it is solvable.\\
Assume now in addition that $\cL$ is large, and let $F^\prime$ be the normalization of an irreducible component of a fiber of $f$. Since the monodromy of the restriction of $\cL$ to $F^\prime$ is contained in the connected solvable algebraic group $\rN(\bC)$, one can apply Proposition \ref{Shafarevich: from smooth to normal}, Proposition \ref{Shafarevich in the solvable case} and Proposition \ref{criterion derived group nilpotent} to infer that $F^\prime$ admits a finite morphism to an abelian variety. The statement follows by taking a product over the irreducible components. The case where $\cL$ is big is similar and left to the reader.
\end{proof}


\subsection{Algebraic varieties with a large complex local system.}
The following statement collects some known results on algebraic varieties supporting a large complex local system.

\begin{thm}\label{known results}
Let $X$ be a connected normal projective complex algebraic variety with a large complex local system $\cL$.
\begin{enumerate}
\item Assume that the monodromy group of $\cL$ is solvable (equivalently the algebraic monodromy group of $\cL$ is solvable). Then, up to a finite étale cover, $X$ is isomorphic to the product of an abelian variety by a variety of general type.

\item Assume that $X$ is Brody-special\footnote{A proper complex algebraic variety $X$ is called Brody-special if there exists a Zariski-dense entire curve $\bC \to X$.}. Then, up to a finite étale cover, $X$ is isomorphic to an abelian variety.

\item Assume that the algebraic monodromy group of $\cL$ is semisimple. Then, $X$ is of general type.

\item Assume that $X$ is weakly-special\footnote{Following Campana \cite{Campana04}, a proper complex algebraic variety $X$ is called weakly special if it does not admit a finite étale cover $X'$ with a rational dominant map $X' \dasharrow Y$ to a positive dimensional variety of general type $Y$.}. Then, up to a finite étale cover, $X$ is isomorphic to an abelian variety.

\end{enumerate}
\end{thm}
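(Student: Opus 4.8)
The plan is to derive all four statements from the structure results of this section together with non-abelian Hodge theory and the already-known cases of Conjecture~\ref{Lang conjecture}, replacing $X$ by a finite étale cover whenever convenient (which is harmless for each conclusion). \emph{Item (1).} After a finite étale cover I may assume that the algebraic monodromy group of $\cL$ is connected, so that by Proposition~\ref{criterion derived group nilpotent} the derived subgroup of the (solvable) monodromy group $\Gamma$ of $\cL$ is nilpotent. Proposition~\ref{Shafarevitch in the solvable case} then identifies $\sh_X^\cL$ with the Shafarevitch morphism of $\pi_1(X)\to\Gamma^{\mathrm{ab}}$, which by the construction recalled in \S\ref{Shafarevitch in the abelian case} is the Stein factorisation of a morphism from $X$ to an abelian variety. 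As $\cL$ is large, $\sh_X^\cL$ has no positive-dimensional fibre and is therefore an isomorphism, so $X$ itself admits a finite morphism to an abelian variety; the structure theory of normal projective varieties finite over an abelian variety (Ueno~\cite{Ueno}, Kawamata~\cite{Kawamata80}) then shows that, up to a finite étale cover, $X$ is the product of an abelian variety by a projective variety of general type.

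\emph{Item (3).} Since the algebraic monodromy group of $\cL$ is semisimple, $\cL$ is semisimple, so by non-abelian Hodge theory (Simpson), after pulling back to a resolution of $X$, it underlies a polystable Higgs bundle with vanishing Chern classes; letting the standard $\bC^{\ast}$-action degenerate this Higgs bundle produces polarized complex variations of Hodge structure. The key point---implicit in the construction of the Shafarevitch morphism in the reductive case \cite{Eyssidieux-reductive,CCE,EKPR}---is that $\sh_X^\cL$ is governed by the period maps of the variations of Hodge structure obtained this way, so that largeness of $\cL$ makes these period maps jointly generically finite; the Griffiths negativity of the Hodge metrics then forces $X$ to be of general type, exactly as for a smooth projective variety carrying a polarized complex variation of Hodge structure with generically finite period map (cf.\ \cite[Proposition~3.5]{CCE} and \cite{Griffiths-Schmid}), and one descends from the resolution by Proposition~\ref{Shafarevitch: from smooth to normal} since general type is a birational invariant. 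The same analysis, invoking instead the absence of non-constant horizontal entire curves in a period domain \cite[Corollary~9.4]{Griffiths-Schmid}, shows that a positive-dimensional projective variety carrying a large local system with semisimple algebraic monodromy group is not Brody-special; I record this for use below.

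\emph{Items (2) and (4).} Both begin with Proposition~\ref{fibration solvable/semisimple}: after a finite étale cover there is a fibration $f\colon X\to Y$ such that $Y$ carries a large complex local system with semisimple algebraic monodromy group, and the restriction of $\cL$ to the normalisation of any fibre of $f$ has solvable monodromy. For item (4): by item (3), $Y$ is of general type, so if $\dim Y>0$ the weakly special variety $X$ would dominate a positive-dimensional variety of general type, a contradiction; hence $Y$ is a point, $\cL$ has solvable monodromy, and item (1) produces a finite étale cover $X'\cong A\times Z$ with $Z$ of general type; since $X'$ is again weakly special, $Z$ must be a point, so $X$ is an abelian variety up to a finite étale cover. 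For item (2): a Zariski-dense entire curve $\bC\to X$ maps via $f$ to a Zariski-dense entire curve in $Y$, so $Y$ is Brody-special, whence by the observation recorded at the end of item (3) $Y$ is a point; then $\cL$ has solvable monodromy and item (1) gives, up to a finite étale cover, $X\cong A\times Z$ with $Z$ of general type and, again by item (1), admitting a finite morphism to an abelian variety. As $X$ is Brody-special so is $Z$, but for a variety of general type admitting a finite morphism to an abelian variety the set $\Sp_h(Z)=\Sp(Z)$ is a proper closed subvariety of $Z$ by the known case of Conjecture~\ref{Lang conjecture} for such varieties (Bloch~\cite{Bloch26}, Ueno~\cite{Ueno}, Ochiai~\cite{Ochiai77}, Kawamata~\cite{Kawamata80}, Yamanoi~\cite{Yamanoi-maximal-Albanese}); since $Z$ is Brody-special this is impossible unless $\dim Z=0$, so $X\cong A$ is an abelian variety up to a finite étale cover.

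The principal obstacle, underlying items (2) and (3), is the Hodge-theoretic input: that the largeness of a semisimple complex local system is inherited by the complex variations of Hodge structure obtained from the Higgs-bundle degeneration---equivalently, that its Shafarevitch morphism is controlled by period maps---which is exactly what permits invoking Griffiths-Schmid to deduce both ``of general type'' (item (3)) and ``not Brody-special''. Granting that, items (1), (2) and (4) come down to combining the Shafarevitch reductions of this section with the structure theory and the known hyperbolicity of varieties admitting a finite morphism to an abelian variety.
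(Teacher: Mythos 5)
Your treatment of items (1) and (4) matches the paper's: item (1) is Proposition \ref{criterion derived group nilpotent} plus Proposition \ref{Shafarevitch in the solvable case} plus Kawamata's structure theorem, and item (4) is Proposition \ref{fibration solvable/semisimple} plus items (3) and (1). Item (3) is in the paper a black-box citation of Zuo \cite{Zuo-Chern-hyperbolicity} (alternatively \cite{CCE}), and your sketch of it already contains the claim that causes trouble later, namely that largeness of a semisimple $\cL$ forces the period maps of the variations of Hodge structure obtained from the $\bC^*$-degeneration of the Higgs bundle to be jointly generically finite. That is not true in general: the non-rigid part of $\cL$ can be unbounded at a non-archimedean place while contributing nothing to the archimedean Hodge-theoretic data, which is exactly why Zuo's proof (and the paper's own Section 5) needs spectral coverings and Katzarkov--Zuo reductions in addition to period maps. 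Since item (3) is a theorem in the literature, this only makes your sketch inaccurate, not the conclusion wrong.

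The genuine gap is in item (2). You reduce it to the ``observation'' that a positive-dimensional normal projective variety carrying a large complex local system with semisimple algebraic monodromy group is not Brody-special, and you assert that this follows from the same analysis as item (3) together with \cite[Corollary 9.4]{Griffiths-Schmid}. It does not: Griffiths--Schmid only excludes Zariski-dense entire curves when the local system is (large and) a polarized variation of Hodge structure with discrete monodromy, i.e.\ Proposition \ref{Special sets with PVHS}. For a general semisimple large local system, an entire curve may lie Zariski-densely while being contracted by every period map attached to $\cL$, its non-isotriviality being detected only by unbounded non-archimedean representations; ruling this out requires the Katzarkov--Zuo reductions together with the second main theorem/jet-differential techniques for entire curves in varieties of maximal Albanese dimension. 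This is precisely the content of Yamanoi's theorem \cite{Yamanoi-Fourier}, which the paper cites verbatim for item (2) (note that the paper's Theorem \ref{non-Archimedean key} controls only $\Sp_{ab}$, not $\Sp_h$, so the paper's own machinery does not yield your observation either). As stated, your argument for item (2) therefore assumes, in the guise of an ``observation'', essentially the theorem it is meant to prove; the rest of your deduction of item (2) from that observation (reduction to the solvable case, then Bloch--Ochiai--Kawamata--Yamanoi for the general-type factor finite over an abelian variety) is fine and indeed mirrors the overall architecture of Yamanoi's proof.
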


\begin{proof}
For the first item, up to replacing $X$ by a finite étale cover, one can assume that the derived group of the monodromy group is nilpotent, cf. Proposition \ref{criterion derived group nilpotent}. It follows from Proposition \ref{Shafarevich in the solvable case} that $A$ admits a finite morphism to an abelian variety. Therefore, thanks to a result of Kawamata \cite[Theorem 13]{Kawamata-characterization}, after passing to another finite étale cover, X is biholomorphic to a product $B \times X^\prime$ of an abelian variety $B$ and a projective variety of general type $X^\prime$ whose dimension is equal to the Kodaira dimension $\kappa(X)$ of $X$. The second item is due to Yamanoi, see \cite{Yamanoi-Fourier} and \cite[Theorem 2.17]{Yamanoi-lectures}. The third item is due to Zuo \cite{Zuo-Chern-hyperbolicity}, see also \cite[Théorème 6.3]{CCE} for an alternative proof. For the last item, it follows from Proposition \ref{fibration solvable/semisimple} that, up to replacing $X$ with a finite étale cover, there exists a fibration $f \colon X \to Y$ onto a normal irreducible projective complex algebraic variety $Y$ such that:
\begin{itemize}
\item $Y$ admits a large complex local system with a semisimple algebraic monodromy group, and
\item the monodromy of the restriction of $\cL$ to the normalization of any fiber of $f$ is solvable.
\end{itemize} 
Thanks to the third item, $Y$ is of general type. Since $X$ is weakly-special, it follows that $Y$ is a point and that $\cL$ has solvable monodromy. But then the result follows from the first item.
\end{proof}

\section{The special subsets coincide}

\begin{thm}\label{Special sets of not general type varieties}
Let $X$ be a projective complex algebraic variety supporting a large complex local system $\cL$. If $X$ is not of general type, then $ \Sp_{alg}(X) = \Sp_{ab}(X) = \Sp_h(X) = X$.
\end{thm}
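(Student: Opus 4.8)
The plan is to reduce to the two special cases identified in the introduction (finite morphism to an abelian variety, and variations of Hodge structure with finite period map) by peeling off the non-semisimple part of the monodromy, using the canonical decompositions of Section \ref{large local systems}. First I would use Proposition \ref{Special sets and finite étale cover} and Proposition \ref{Special sets and birational morphism} (together with Proposition \ref{large local systems and finite étale cover}) to reduce to the case where $X$ is normal and irreducible, so that all the structure results apply; the point is that the three special subsets and the property of being of general type are all insensitive, up to the relevant notion, to finite étale covers and birational modifications. Since we want to show $\Sp_*(X) = X$, and the inclusions $\Sp_{ab}(X) \subset \Sp(X)$, $\Sp_{ab}(X) \subset \Sp_h(X)$ hold by Proposition \ref{trivial inclusions}, it suffices to prove that $\Sp_{ab}(X)$ is Zariski-dense in $X$ — then equality of all three with $X$ will follow, provided one also argues the reverse inclusions $\Sp(X), \Sp_h(X) \subset X$ hold with density upgraded to equality (for this one uses that these sets are constructible enough, or rather that the statement is really about Zariski-density and the hypothesis ``$X$ not of general type'' propagates to all positive-dimensional subvarieties in the relevant stratified sense — here I would lean on Proposition \ref{pull-back large} to keep largeness on subvarieties).

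Next I would run the dévissage. Apply Proposition \ref{fibration unipotent/reductive}: up to a finite étale cover there is a fibration $f : X \to Y$ where $Y$ carries a large local system with reductive algebraic monodromy group, the fibers of $f$ have unipotent monodromy and — crucially — the normalization of every fiber admits a finite morphism to an abelian variety. By the works of Bloch, Ueno, Ochiai, Kawamata and Yamanoi cited in the introduction (case (1)), each such fiber $F$ satisfies $\Sp_{ab}(F) = F$, i.e. is covered by images of abelian varieties, hence $\Sp_{ab}(X/Y) = \bigcup_{y} \Sp_{ab}(X_y)$ is Zariski-dense in $X$ — indeed it dominates $X$ via $f$. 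So by Proposition \ref{Special sets and fibrations} it is enough to understand $\Sp_{ab}(Y)$, or rather to show that $\overline{\Sp_{ab}(X)} = X$ knowing $\Sp_{ab}(X/Y)$ is dense; but density of $\Sp_{ab}(X/Y)$ already gives $\overline{\Sp_{ab}(X)} \supset \overline{\Sp_{ab}(X/Y)} = X$. Thus one reduces to treating $Y$, which now has reductive (semisimple-up-to-isogeny) monodromy. Here apply Proposition \ref{fibration reductive}: up to finite étale cover there is a fibration $f : Y \to Z$ and a morphism $g : Y \to A$ to an abelian variety with $(f,g)$ finite, $Z$ carrying a large local system with semisimple algebraic monodromy, and the fibers of $f$ having commutative — hence solvable — monodromy. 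By Theorem \ref{known results}(3), $Z$ is of general type. By Theorem \ref{known results}(1) applied to the fibers of $f$ (which carry a large solvable-monodromy local system, using Proposition \ref{pull-back large}), each fiber is, up to finite étale cover, a product of an abelian variety and a variety of general type.

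Now the endgame: since $(f,g) : Y \to Z \times A$ is finite and $Z$ is of general type, if $Z$ is positive-dimensional then $Y$ admits a dominant map to a positive-dimensional variety of general type. But $Y$ is not of general type (it receives a finite map from / is birational to a piece built over $X$ which is not of general type — here I would need to track carefully how ``not of general type'' is inherited along the fibrations $X \to Y \to Z$, using easy addition / Viehweg's $C_{n,m}$-type inequalities, and this is the point requiring care): not of general type forces, via the finite map to $Z\times A$ and subadditivity of Kodaira dimension, that $\dim Z = 0$. Hence $Y$ itself has solvable — indeed commutative — monodromy, and by Theorem \ref{known results}(1), $Y$ is, up to finite étale cover, a product of an abelian variety and a variety of general type $Y'$; again since $Y$ is not of general type, the reductive/semisimple-monodromy part forces $Y'$ to be a point, so $Y$ is an abelian variety up to finite étale cover, whence $\Sp_{ab}(Y) = Y$. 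Combining, $\Sp_{ab}(X/Y)$ dense plus $\Sp_{ab}(Y) = Y$ gives $\overline{\Sp_{ab}(X)} = X$ via Proposition \ref{Special sets and fibrations}, and then Proposition \ref{trivial inclusions} together with the trivial inclusions $\Sp(X), \Sp_h(X) \subset X$ yields $\Sp(X) = \Sp_{ab}(X) = \Sp_h(X) = X$ after passing back down the finite étale cover and birational modification via Propositions \ref{Special sets and finite étale cover} and \ref{Special sets and birational morphism}. The main obstacle I expect is bookkeeping the hypothesis ``$X$ not of general type'' through the two fibrations and the finite morphisms — ensuring at each stage that the base of the Shafarevitch-type fibration cannot be positive-dimensional of general type, which is exactly where one needs subadditivity of Kodaira dimension for fiber spaces whose fibers have maximal Albanese dimension (the input from \cite{Bruni-relative-Lang} alluded to in the introduction), rather than just the cases where Viehweg's theorem suffices.
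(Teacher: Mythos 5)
There is a genuine gap at the central step. After applying Proposition \ref{fibration unipotent/reductive} you assert that each fiber $F$ of $f$, having (up to normalization) a finite morphism to an abelian variety, satisfies $\Sp_{ab}(F)=F$ ``by the works of Bloch--Ueno--Ochiai--Kawamata--Yamanoi.'' This is false as stated: those results give Conjecture \ref{Lang conjecture} for such varieties, so $\Sp_{ab}(F)=F$ holds \emph{only if $F$ is not of general type} (a genus-$2$ curve in its Jacobian has $\Sp_{ab}=\varnothing$). Nowhere do you establish that the fibers are not of general type, and with the unipotent/reductive decomposition this is not accessible: the base $Y$ has reductive monodromy and need not be of general type, so you cannot argue ``base of general type $+$ $X$ not of general type $\Rightarrow$ fibers not of general type.'' The paper instead uses the solvable/semisimple decomposition (Proposition \ref{fibration solvable/semisimple}) exactly so that the base \emph{is} of general type (Theorem \ref{known results}(3), Zuo), which forces the generic fiber to be positive-dimensional and not of general type; crucially, Nakayama's Theorem \ref{Nakayama} is then needed to propagate ``not of general type'' from the generic fiber to \emph{every irreducible component of every fiber}, and only then does Theorem \ref{known results}(1) produce a positive-dimensional abelian factor covering each fiber. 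Your proposal omits Nakayama entirely, and without it one gets at best a statement about general fibers, i.e.\ Zariski-density of $\Sp_{ab}(X)$ rather than the asserted equality $\Sp_{ab}(X)=X$ --- a distinction you blur from the first paragraph onward (density of a countable union of images of abelian varieties does not give equality).

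Two further points in the same vein. First, your endgame for $Y$ is unsound: from ``$Y$ not of general type'' (itself not justified) and ``$(f,g):Y\to Z\times A$ finite with $Z$ of general type'' you conclude $\dim Z=0$; but a variety fibered over a positive-dimensional general type base need not be of general type (think $C\times E$), so no contradiction arises and $Z$ need not be a point. Second, for the initial reduction the paper replaces $X$ by its normalization, which is a \emph{finite surjective} morphism, precisely because $\Sp_{ab}(X')=X'$ then pushes forward to $\Sp_{ab}(X)=X$; reducing via birational modifications as you propose (Proposition \ref{Special sets and birational morphism}) only preserves Zariski-density, not equality.
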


\begin{proof}
Thanks to Lemma \ref{trivial inclusions}, it is sufficient to prove that $\Sp_{ab}(X) = X$. It is harmless to assume that $X$ is irreducible. Note also that one can freely replace $X$ with any projective complex algebraic variety $X^\prime$ not of general type and such that there exists a finite surjective morphism $X^\prime \to X$. Indeed, the pull-back to $X^\prime$ of the local system $\cL$ is still large thanks to Proposition \ref{pull-back large}, whereas $\Sp_{ab}(X^\prime) = X^\prime$ implies $\Sp_{ab}(X) = X$. In particular, one can assume that $X$ is normal. We will also freely replace $X$ by any finite étale cover.\\
Up to replacing $X$ with a finite étale cover, one can assume that there exists a fibration $f \colon X \to Y$ as in Proposition \ref{fibration solvable/semisimple}. The normal projective variety $Y$ admits a large complex local system with a semisimple algebraic group, hence it is of general type thanks to Theorem \ref{known results}. Since by assumption $X$ is not of general type, the (geometric) generic fibre of $f$ is a positive-dimensional variety which is not of general type.\\
Let $C$ be an irreducible component of a fibre of $f$. Thanks to Theorem \ref{Nakayama}, $C$ is not of general type. On the other hand, by definition of $f$, the restriction of $\cL$ to the normalization of $C$ is a large complex local system with solvable monodromy. Therefore, thanks to Theorem \ref{known results}, the normalization of $C$ is, up to a finite étale cover, a product of a positive dimensional abelian variety by a variety of general type. This proves that the fibers of $f$ are covered by images of abelian varieties by finite maps, hence a fortiori $\Sp_{ab}(X) = X$.
\end{proof}

\begin{cor}\label{Equality of special subsets}
Let $X$ be a projective complex algebraic variety supporting a large complex local system $\cL$. Then,
\[  \Sp_{alg}(X) = \Sp_{ab}(X) = \Sp_h(X) .\]
\end{cor}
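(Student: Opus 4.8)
The plan is to bootstrap from Theorem \ref{Special sets of not general type varieties} by passing to subvarieties. By Proposition \ref{trivial inclusions} we already have $\Sp_{ab}(X) \subset \Sp(X)$ and $\Sp_{ab}(X) \subset \Sp_h(X)$, so only the reverse inclusions $\Sp(X) \subset \Sp_{ab}(X)$ and $\Sp_h(X) \subset \Sp_{ab}(X)$ remain. The common mechanism is the following claim: \emph{every positive-dimensional irreducible closed subvariety $Z \subset X$ that is not of general type is contained in $\Sp_{ab}(X)$}. Indeed, the normalization $\nu \colon \bar Z \to X$ is a finite morphism from a normal projective variety; the pull-back $\nu^{-1}\cL$ is large by Proposition \ref{pull-back large}; and $\bar Z$ is not of general type, this being a birational invariant of the desingularization. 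Hence Theorem \ref{Special sets of not general type varieties} applies to $\bar Z$ and yields $\Sp_{ab}(\bar Z) = \bar Z$. Composing a non-constant rational map $A \dasharrow \bar Z$ from an abelian variety with $\nu$ produces a non-constant rational map $A \dasharrow X$ (non-constant because $\nu$ is finite), and since $\nu$ is proper its image is $\nu$ of the image of $A \dasharrow \bar Z$; taking the union over all such maps gives $Z = \nu(\bar Z) = \nu(\Sp_{ab}(\bar Z)) \subset \Sp_{ab}(X)$, proving the claim.

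Granting the claim, the inclusion $\Sp(X) \subset \Sp_{ab}(X)$ is immediate, since by definition every point of $\Sp(X)$ lies on such a $Z$. For $\Sp_h(X) \subset \Sp_{ab}(X)$, let $\gamma \colon \bC \to X$ be a non-constant holomorphic map and let $W$ be the Zariski closure of $\gamma(\bC)$; since $\bC$ is connected, $W$ is an irreducible closed subvariety, and it is positive-dimensional as $\gamma$ is non-constant. By the claim it suffices to show that $W$ is not of general type. Normalize: $\nu \colon \bar W \to X$ is finite, $\bar W$ is normal projective, $\cL_{|\bar W}$ is large, and — because $\gamma(\bC)$ is Zariski-dense, hence meets the dense open locus over which $\bar W \to W$ is an isomorphism — the entire curve $\gamma$ lifts to a non-constant, Zariski-dense holomorphic map $\bC \to \bar W$, so $\bar W$ is Brody-special. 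Theorem \ref{known results}(2) then shows that $\bar W$ is, up to a finite étale cover, an abelian variety; in particular $\bar W$, and therefore $W$, is not of general type. Applying the claim to $W$ gives $\Sp_h(X) \subset \Sp_{ab}(X)$, and combining all inclusions with Proposition \ref{trivial inclusions} yields $\Sp(X) = \Sp_{ab}(X) = \Sp_h(X)$.

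The one genuinely delicate point is the lifting of the entire curve through the normalization $\bar W \to W$: the Zariski closure $W$ of $\gamma(\bC)$ need not be normal, and one must extend the lift of $\gamma$, a priori defined only on the complement of the discrete preimage of the non-normal locus, to a holomorphic map $\bC \to \bar W$. This is a removable-singularity argument: near such a puncture the closure of the image of the lift is contained in a finite fiber of $\bar W \to W$ (using that $\bar W$ is compact and $\bar W \to W$ finite), the cluster set of the lift there is connected, hence reduces to a single point, and the lift extends continuously, therefore holomorphically. Apart from this, the argument is formal, the substantive inputs being Theorem \ref{Special sets of not general type varieties} (which packages the two special cases of maximal Albanese dimension and of variations of Hodge structure) and, for entire curves, Yamanoi's classification of Brody-special varieties carrying a large local system recorded in Theorem \ref{known results}(2).
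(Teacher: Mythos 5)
Your proposal is correct and follows essentially the same route as the paper: the inclusion $\Sp(X)\subset\Sp_{ab}(X)$ comes from applying Theorem \ref{Special sets of not general type varieties} to normalizations of non-general-type subvarieties (the paper leaves this reduction implicit; you spell it out), and $\Sp_h(X)\subset\Sp_{ab}(X)$ comes from lifting an entire curve to the normalization of the Zariski closure of its image and invoking Theorem \ref{known results}(2). Your careful treatment of the removable-singularity step in lifting the entire curve through the normalization is a point the paper glosses over, and is welcome.
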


\begin{proof}
It follows from Theorem \ref{Special sets of not general type varieties} that $\Sp_{alg}(X) \subset  \Sp_{ab}(X)$, hence the equality thanks to Lemma \ref{trivial inclusions}. Moreover, given an entire curve $\bC \to X$, the normalization $Z$ of the Zariski-closure of its image in $X$ is connected and Brody-special. Since the induced morphism $Z \to X$ is finite, the pull-back of $\cL$ to $Z$ is still a large complex local system (cf. Proposition \ref{pull-back large}). Therefore a finite étale cover of $Z$ is isomorphic to an abelian variety thanks to Theorem \ref{known results}. This proves the inclusion $\Sp_{h}(X) \subset \Sp_{ab}(X)$, hence the equality thanks to Lemma \ref{trivial inclusions}. 
\end{proof}

\section{A non-Archimedean detour}

Throughout this section, let $k$ be a non-Archimedean local field (complete and locally compact by definition). Concretely, such a field $k$ is a finite extension either of $\bQ_p$ for some prime number $p$ or of a field of formal Laurent series $\mathbb{F}_q((T))$ over a finite field. The goal of this section is to prove the following result, which is a key step in the proof of Theorem \ref{Lang for big}.

\begin{thm}\label{non-Archimedean key}
Let $\rG$ be an absolutely almost-simple\footnote{An algebraic group over field $k$ is almost-simple if it is semisimple, noncommutative, and every proper normal subgroup is finite. It is absolutely almost-simple if its base-change to an algebraic closure of $k$ is almost-simple} algebraic group over $k$. Let $X$ be a connected smooth projective complex algebraic variety. If there exists a big representation $\rho \colon \pi_1(X) \rightarrow \rG(k)$ whose image is Zariski-dense in $\rG$ and unbounded in $\rG(k)$, then $\overline{\Sp_{ab}(X)} \neq X$.
\end{thm}

It is known by \cite[Theorem 1]{Zuo-Chern-hyperbolicity} and \cite[Theorem 6.3]{CCE} that a variety $X$ satisfying the hypotheses of Theorem~\ref{non-Archimedean key} is of general type. Our proof of Theorem~\ref{non-Archimedean key} relies on similar techniques, but it diverges on a key point: instead of using spectral covers (whose behavior under pull-back is unclear), we utilize a construction by Klingler that we recall in section \ref{A construction of Klingler} below.

\subsection{Katzarkov-Zuo reductions.}
We will use the Katzarkov-Zuo reduction of a $p$-adic representation, which is due to Eyssidieux \cite[Proposition 1.4.7]{Eyssidieux-reductive}, based on former works of Katzarkov and Zuo \cite{Katzarkov, Zuo-Chern-hyperbolicity, Zuo-book}.

\begin{thm}
Let $k$ be a non-Archimedean local field. Let $\rG$ be a reductive algebraic group over $k$. Let $X$ be a connected normal projective complex algebraic variety and $\rho \colon  \pi_1(X) \rightarrow \rG(k)$ a representation with Zariski-dense image. Then there exists a connected normal projective complex algebraic variety $S$ and a surjective algebraic map with connected fibres $\sigma \colon X \rightarrow S$ such that the following property holds: for any connected normal projective complex algebraic variety $Z$ and any algebraic map $f \colon Z \rightarrow X$, the composite map $\sigma \circ f \colon Z \rightarrow S$ is constant if and only if the representation $f^\ast \rho$ has bounded image.
\end{thm}

Observe that a fibration $\sigma \colon X \rightarrow S$ with this property is unique, up to unique isomorphism. It is called the Katzarkov-Zuo reduction of $(X,\rho)$. Its existence is proved in \cite{Eyssidieux-reductive} for a smooth $X$. However, one can argue as in Proposition \ref{Shafarevich: from smooth to normal} to prove its existence more generally when $X$ is normal.\\

The following result shows that in the situation of Theorem \ref{non-Archimedean key} the Katzarkov-Zuo reduction is a birational model of the Shafarevich morphism.

\begin{thm}\label{relating_KZ_to_Shafa}
Let $\rG$ be an almost-simple algebraic group over $k$. Let $X$ be a connected normal projective complex algebraic variety. Let $\rho \colon \pi_1(X) \rightarrow \rG(k)$ be a big representation whose image is Zariski-dense in $\rG$ and unbounded in $\rG(k)$. Then the Katzarkov-Zuo reduction $\sigma \colon X \rightarrow S$ of $(X, \rho)$ is birational.
\end{thm}

\begin{proof}
We can assume without loss of generality that $X$ is smooth. Let $X_s$ be a general fiber of the Katzarkov-Zuo reduction $\sigma \colon X \rightarrow S$ of $(X, \rho)$. The image of the homomorphism $\pi_1(X_s) \to \pi_1(X)$ is a normal subgroup of $\pi_1(X)$. Let $\Gamma$ denote the image of the representation $\rho \colon \pi_1(X) \to \rG(k)$. Let $\Delta$ denote the image of the composite map $\pi_1(X_s) \to \pi_1(X) \to \rG(k)$. The assumptions of Lemma \ref{Key_Lemma} below are fulfilled, hence $\Delta$ is finite. Since the representation $\rho$ is big, this is only possible if $X_s$ has dimension zero. This proves that $\sigma$ is birational.
\end{proof}

\begin{lem}\label{Key_Lemma}
Let $\rG$ be an almost-simple algebraic group over $k$. Let $\Gamma \subset \rG(k)$ be a Zariski-dense and unbounded subgroup. Let $\Delta$ be a normal subgroup of $\Gamma$. If $\Delta$ is bounded in $\rG(k)$, then $\Delta$ is finite.
\end{lem}
\begin{proof}
Let $\mathscr{B}(\rG, k)$ be the Bruhat-Tits building of $\rG$. Then $\mathscr{B}(\rG, k)$ can be realized as an open subset of a compact Hausdorff topological space $\overline{{\mathscr{B}}}(\rG, k)$, such that the action of $G(k)$ on $\mathscr{B}(\rG, k)$ extends to a continuous action of $G(k)$ on $\overline{{\mathscr{B}}}(\rG, k)$ and the stabilizer of every point in the boundary $\partial\overline{{\mathscr{B}}}(\rG, k) = \overline{{\mathscr{B}}}(\rG, k) \backslash \mathscr{B}(\rG, k)$ is contained in the $k$-points of a proper parabolic subgroup of $\rG$. For example, one may take $\overline{\mathscr{B}}(G, k)$ to be the compactification constructed by Borel and Serre in \cite{Borel-Serre}; see in particular Theorem 5.4 therein. In particular, a subgroup of $\rG(k)$ that fixes a point in the boundary $\partial\overline{{\mathscr{B}}}(\rG, k)$ cannot be Zariski-dense in $\rG$.\\

Since $\Delta$ is normal in $\Gamma$, its Zariski-closure $\bar{\Delta}^{Zar}$ is normal in $\bar{\Gamma}^{Zar} = \rG$. As $\rG$ is almost-simple, it follows that $\bar{\Delta}^{Zar}$ is either equal to $\rG$ or is a finite group. Let $\cF \subset \overline{{\mathscr{B}}}(\rG, k)$ be the subset of points that are fixed by the induced action of $\Delta$. This set is compact Hausdorff, since the action of $\rG(k)$ on $\overline{{\mathscr{B}}}(\rG, k)$ is continuous and $\overline{{\mathscr{B}}}(\rG, k)$ is compact Hausdorff. Moreover, $\cF$ is non-empty, as $\Delta$ is bounded by assumption. Since $\Delta$ is normal in $\Gamma$, the action of $\Gamma$ on $\overline{\mathscr{B}}(\rG, k)$ preserves $\cF$. If $\cF$ were entirely contained in the building $\mathscr{B}(\rG, k)$, then $\Gamma$ would fix the barycenter of $\cF$, contradicting the assumption that $\Gamma$ is unbounded. Hence, $\cF$ must intersect the boundary $\partial\overline{\mathscr{B}}(\rG, k)$. From the earlier discussion, this implies that $\bar{\Delta}^{\text{Zar}} \ne \rG$, and therefore must be finite. It follows that $\Delta$ itself is finite.
 \end{proof}

\begin{cor}\label{Katzarkov-Zuo images satisfy Lang}
Let $\rG$ be an absolutely almost-simple algebraic group over $k$. Let $X$ be a connected normal projective complex algebraic variety. Let $\rho \colon \pi_1(X) \rightarrow \rG(k)$ be a representation. Assume that the image of $\rho$ is torsion-free, Zariski-dense in $\rG$ and unbounded in $\rG(k)$. Let $\sigma \colon X \to S$ be the Katzarkov-Zuo reduction of $(X,\rho)$. Then $S$ is of general type and $\overline{\Sp_{ab}(S)} \neq S$.   
\end{cor}

\begin{proof}
We may harmlessly replace $X$ with a smooth projective variety mapping birationally to it. Therefore, thanks to \cite[Lemma 2.7]{CCE}, one may assume there exist a projective normal complex algebraic variety $Y$, a fibration $f \colon X \to Y$ and a big representation $\rho_Y \colon \pi_1(Y) \rightarrow \rG(k)$ on $Y$ such that $\rho = f^{-1} \rho_Y$. In particular, the map $\sigma \colon X \to S$ factors through $f \colon X \to Y$ and the induced map $\sigma_Y \colon Y \to S$ is the Katzarkov-Zuo reduction of $(Y,\rho_Y)$. Since $\rho$ and $\rho_Y$ have the same image, Theorem \ref{relating_KZ_to_Shafa} implies that the map $\sigma_Y \colon Y \to S$ is birational. Therefore, the result follows from Theorem \ref{non-Archimedean key} and Proposition \ref{Special sets and birational morphism}.
\end{proof}

\subsection{Klingler's local system}\label{A construction of Klingler}
In this section, we revisit Klingler’s construction \cite[Section 2.2.2]{Klingler2003} of a local system associated with a pluriharmonic map into a building.

\subsubsection{Recollections on buildings}
Let $\rG$ be a connected split semisimple algebraic group over $k$. Let $\rS$ be a split maximal torus of $\rG$. Let $X_\ast(S)$ be the group of $1$-parameter subgroups of $\rS$ and $X^\ast(S)$ be the group of characters of $\rS$. Let $ \langle \, ,  \rangle \colon X_\ast(S) \times X^\ast(S) \to \bZ$ be the perfect pairing of abelian groups such that $\langle \lambda, \chi \rangle$ is the integer such that $\left(\chi \circ \lambda \right) (t) = t^{\langle \lambda, \chi \rangle}$ for every $t \in \bG_m$. \\
Let $\rN$ be the normalizer of $S$ in $G$. The group $W := \rN(k) / \rS(k)$ is finite. The canonical action of $W$ on the real vector space $\cV := X_\ast(S) \otimes _\bZ \bR$ is faithful and identifies $W \subset \GL(\cV)$ with the Weyl group of the root system associated to $(\rG, \rS)$.\\
Denoting $\omega \colon k^\ast \to \bR$ the discrete valuation of $k$, there is a unique group homomorphism $\nu \colon \rS(k) \to \cV$ such that $ \langle \nu(z), \chi \rangle = - \omega (\chi(z)) $ for all $z \in \rS(k)$ and $\chi \in X^\ast(S)$. Let $S_c$ denote the kernel of $\nu$. Then $\Lambda := \rS(K) / S_c$ is a free abelian group of rank $\dim \rS = \dim_\bR \cV$, and the quotient $\tilde W := N(K)/ S_c$ is an extension of $W$ by $\Lambda$.\\
There is a real affine space $\cA = \cA(\rG, \rS, k)$ under $\cV$, unique up to unique isomorphism, such that $\nu$ extends to a homomorphism of $\tilde \nu \colon \rN(k) \to \mathrm{Aff}(\cA)$ in the group of affine transformations of $\cA$. The homomorphism $\tilde \nu$ factors through $\rN(k) \to \tilde W$, and the two composite maps $\tilde W \to \mathrm{Aff}(\cA) \to \GL(\cV)$ and $\tilde W \to W \subset \GL(\cV)$ are equal.\\
Let $\mathscr{B}(\rG, k)$ be the Bruhat-Tits building of $\rG / k$. Then $\cA$ is the real affine space on which the apartments of $\mathscr{B}(\rG, k)$ are modeled. It will be important to note for later that for any finite field extension $l \supset k$ (where the valuation on $l$ induces the valuation on $k$), the Weyl group $W$ and the $W$-module $\cV$ remain canonically identified. 

\subsubsection{Recollections on harmonic maps}

Let $X$ be a Riemannian manifold and $f \colon X \to \mathscr{B}(\rG, k)$ be a locally Lipschitz continuous map. A point $x \in X$ is called regular for $f$ if there is an apartment in $\mathscr{B}(\rG, k)$ that contains $f(U)$ for a sufficiently small neighborhood $U$ of $x$ in $X$ \cite[p.225]{Gromov-Schoen}. Otherwise $x$ is called singular. The map $f$ is called harmonic if for every point $x \in X$, there exists a small ball $B$ centered at $x$ on which $f$ minimizes the energy relatively to $f_{|\partial B}$ \cite[p.232]{Gromov-Schoen}. (We refer to \cite{Gromov-Schoen} for the definition of the energy.) \\

Assume now that $X$ is a connected compact K\"ahler manifold and let  $\rho \colon \pi_1(X) \rightarrow \rG(k)$ be a representation with Zariski-dense image. Thanks to \cite[Theorem 7.1]{Gromov-Schoen} there exists a Lipschitz harmonic $\rho$-equivariant map $f \colon \tilde{X} \to \mathscr{B}(\rG, k)$ from the universal covering of $X$ (with finite energy since $X$ is compact). The subset $\widetilde{\Reg}(f) \subset \tilde{X}$ of regular points for $f$ is a $\pi_1(X)$-invariant open subset of $\tilde{X}$, and one denotes by $\Reg(f)$ its image in $X$. The Hausdorff codimension of its complement $\Sing(f) \subset X$ is at least $2$ \cite[Theorem 6.4]{Gromov-Schoen}. Moreover $f$ is pluriharmonic, i.e. $\partial \bar \partial f = 0$ on $\widetilde{\Reg}(f)$\cite[Theorem 7.3]{Gromov-Schoen}, and for any holomorphic map $g\colon Y \to X$ from a connected compact K\"ahler manifold $Y$ with universal covering $\tilde{Y}$, the representation $g^\ast \rho \colon \pi_1(Y) \rightarrow \rG(k)$ is semisimple (i.e., the Zariski-closure of its image is a reductive group) \cite{Corlette_JDG} and the composite map $f \circ \tilde{g} \colon \tilde{Y} \to \mathscr{B}(\rG, k)$ is $g^\ast \rho $-equivariant and pluriharmonic \cite[Corollaire 1.3.8]{Eyssidieux-reductive}.

\subsubsection{Definition of Klingler's local system}
Klingler explains in \cite[section 2.2.2]{Klingler2003} the construction of a complex local system $F(X, \rho)$ on $\Reg(f)$ with finite monodromy that corresponds intuitively to the pull-back by $f$ of the complexified tangent bundle of the building $\mathscr{B}(\rG, k)$. We briefly recall the construction and refer to \textit{loc. cit.} for the details.\\

Let $x \in \widetilde{\Reg}(f)$, so that there exists an isometric embedding $i \colon \cA \subset \mathscr{B}(\rG, k)$ and a neighborhood $B$ of $x$ in $\tilde{X}$ such that the map $f_{|B} \colon B \to \mathscr{B}(\rG, k)$ factors through a pluriharmonic map $h \colon B \to \cA$. The map $h$ is well-defined up to the action of $\tilde W$ on $\cA$. Since $h$ is pluriharmonic, by taking the $(1,0)$-part of the complexification of its differential, one obtains a $\bC$-linear map $\cV_{\bC}^\vee \to \Omega^1_B$, well-defined up to the action of $W$ on $\cV_{\bC}^\vee$. More precisely, the germs of all maps $h$ and $dh^{(1,0)}$ form respectively a $\tilde W$-torsor and a $W$-torsor on $\widetilde{\Reg}(f)$, related by the homomorphism $\tilde W \to W$. These torsors descend to $\Reg(f)$. Therefore, we get from the map $f$ a canonical monodromy representation $\pi_1(\Reg(f)) \to \tilde W$, and by composing with the homomorphisms $\tilde W \to W$ and $W \subset \GL(\cV)$, we obtain a real local system $F_{\bR}(f)$ on $\Reg(f)$ corresponding to the monodromy representation $\pi_1(\Reg(f)) \to W \subset \GL(\cV)$. The derivative of $f$ yields a real one-form $\mu^{\bR}_f$ on $\Reg(f)$ with values in $F_{\bR}(f)$. We denote by $F(f)$ the complex local system associated to $F_{\bR}(f)$. Since $h$ is pluriharmonic, the complexification of $\mu^{\bR}_f$ is a holomorphic one-form $\mu_f$ on $\Reg(f)$ with values in $F(f)$. The one-form $\mu_f$ is everywhere zero if and only if $f$ is constant, if and only if $\rho$ has bounded image.\\

The following compatibilities follow readily from the construction: 
\begin{itemize}
    \item Let $Y$ be a connected compact K\"ahler manifold. Let $g \colon Y \to X$ be a holomorphic map with lifting $\tilde g \colon \tilde Y \to \tilde X$, so that the map $f \circ \tilde g$ is pluriharmonic. If $g(Y) \not \subset \Sing(f)$, then $F(f \circ \tilde g) = g^{-1} F(f)$ on $\Reg(f \circ \tilde g) \cap g^{-1} \left( \Reg(f) \right)$.
\item If $l \supset k$ is a finite extension and $\mathscr{B}(\rG, k) \subset \mathscr{B}(\rG, l)$ is the canonical $\rG(k)$-equivariant inclusion, then $f$ induces a pluriharmonic map $f_l \colon \tilde X \to \mathscr{B}(\rG, l)$ which is equivariant with respect to the representation $\pi_1(X) \to \rG(k) \subset \rG(l)$. Then $\Reg(f) \subset \Reg(f_l)$ and that $F(f_l)_{|\Reg(f)} = F(f)$.
\end{itemize}

\begin{prop}\label{Klocal system trivial on abelian varieties}
Let $\rG$, $\mathscr{B}(\rG, k)$, $X$, $\rho \colon \pi_1(X) \to \rG(k)$, $f \colon \tilde X \to \mathscr{B}(\rG, k)$ as above. Let $Y$ be a connected compact K\"ahler manifold with an abelian fundamental group. Let $g \colon Y \to X$ be a holomorphic map with lifting $\tilde g \colon \tilde Y \to \tilde X$. Let $\rho_{|Y} := g^{-1} \rho \colon \pi_1(Y) \to \rG(k)$. If $g(Y) \not \subset \Sing(f)$, then there exists a finite extension $l \supset k$ such that the image of the pluriharmonic map $f \circ \tilde g \colon \tilde Y \to  \mathscr{B}(\rG, k)$ is contained in an apartment of $\mathscr{B}(\rG, l)$ via the canonical $\rG(k)$-equivariant inclusion $\mathscr{B}(\rG, k) \subset \mathscr{B}(\rG, l)$. In particular, the local system $F(f \circ \tilde g)$ is trivial.   
\end{prop}

In fact the proof will give an explicit description of the map $f \circ \tilde g$.
\begin{proof}
We keep the notation introduced at the beginning of this section.\\
Let $H$ be the Zariski closure of the image of $\rho_Y$. It is a $k$-algebraic group, and up to replacing $Y$ with a finite étale cover, one can assume that $H$ is a connected $k$-algebraic group.  Recall that by a theorem of Corlette \cite{Corlette_JDG}, the pull-back of a semisimple complex local system  by an algebraic map between smooth complex projective varieties is again semisimple. Therefore, choosing an embedding of $k$ in $\bC$, we see that $H$ is a reductive $k$-algebraic group. Moreover, since by assumption $\pi_1(Y)$ is abelian, $H$ is commutative. It follows that $H$ is a torus.  \\
Up to replacing $k$ with a finite extension, one can assume that $H$ is split over $k$. We may also assume that the split maximal torus $S$ contains $H$. Let $h \colon \tilde Y \to \cA$ be a pluriharmonic map which is equivariant with respect to the representation obtained by composing $\pi_1(Y) \to H(k) \subset S(k)$ with $\nu \colon S(k) \to \cV \subset \mathrm{Aff}(\cA)$. Fix $y \in \tilde Y$ and choose an embedding $i \colon \cA \to  \mathscr{B}(\rG, k)$ identifying $\cA$ with an apartment of $\mathscr{B}(\rG, k)$ that contains $f(y)$. Up to translating $h$ with an element of $\cV$, one can assume that the two pluriharmonic maps $f \circ \tilde g$ and $i \circ h$ coincide at $y \in \tilde Y$.\\
If $d$ denote the Bruhat-Tits distance on $\mathscr{B}(\rG, k)$, the function $\tilde Y \to \bR, x \mapsto d(f \circ \tilde g(x),i \circ h(x))$ is $\rho_{|Y}$-equivariant and plurisubharmonic \cite[Lemma 5.3]{Gromov-Schoen}. Therefore, it is the pull-back of a plurisubharmonic function on the compact complex manifold $Y$, hence it is constant by the maximum principle. Since it is zero at $y$, it is zero everywhere. This proves that $f \circ \tilde g = i \circ h$. In particular, the image of the map $f \circ \tilde g \colon \tilde Y \to  \mathscr{B}(\rG, k)$ is contained in the apartment $i(\cA)$ of $\mathscr{B}(\rG, k)$. 
\end{proof}

\subsection{Proof of Theorem~\ref{non-Archimedean key}}

\begin{prop}\label{non-Archimedean key-special case}
Let $\rG$ be an almost-simple algebraic group over $k$. Let $X$ be a connected smooth projective variety. Let $\rho \colon \pi_1(X) \rightarrow \rG(k)$ be a big representation whose image is Zariski-dense in $\rG$ and unbounded in $\rG(k)$. Let $f \colon \tilde{X} \to \mathscr{B}(\rG, k)$ be a pluriharmonic $\rho$-equivariant map. Assume that the associated local system $F(f)$ is trivial. Then:
\begin{enumerate}
    \item The Albanese map $\alb_X \colon X \to \Alb(X)$ is generically finite.  
    \item $X$ is of general type.
\end{enumerate}
\end{prop}

In particular, it follows from a result of Yamanoi \cite[Corollary 1]{Yamanoi-maximal-Albanese} that $\overline{\Sp_{ab}(X)} \neq X$.

\begin{proof}
The subset $\Sing(f)$ is contained in a closed analytic subset of $X$ \cite[Proposition 1.3.3]{Eyssidieux-reductive} distinct from $X$. Since $f$ is Lipschitz, the holomorphic one-form $\mu_f \in \HH^0(\Reg(f) , \Omega^1 \otimes_\bC \cV)$ is bounded, hence it extends to a global holomorphic one-form $\mu_f \in \HH^0(X, \Omega^1_X \otimes_\bC \cV)$. Since the image of $\rho$ is unbounded, the map $f$ is nonconstant and $\mu_f \neq 0$. In particular, the map  $\alb_X$ is non constant. \\
Let $F$ be a connected component of a generic fiber of $\alb_X$. The restriction of $\mu_f$ to $F$ is zero, hence the restriction of $f$ to the preimage of $F$ in $\tilde X$ is constant. Equivalently, the pull-back of $\rho$ to $F$ has bounded image. However, thanks to Theorem~\ref{relating_KZ_to_Shafa}, the Katzarkov-Zuo reduction of $(X, \rho)$ is birational, hence $F$ is necessarily a point. Therefore, $\alb_X$ is generically finite.\\

Let us now prove that $X$ is of general type, following an argument essentially due to Zuo \cite{Zuo-Chern-hyperbolicity}. Since $\alb_X$ is generically finite, up to replacing $X$ by a birational model, the generic fiber of the Iitaka fibration of $X$ is an abelian variety $A$ \cite[Proposition 17.5.1]{Kollar-Shafa-book}; see also \cite[Theorem 13]{Kawamata-characterization}.
The image of the group homomorphism $\pi_1(A) \to \pi_1(X)$ is a normal subgroup of $\pi_1(X)$ and the image of $\pi_1(X)$ by $\rho$ is Zariski-dense in $\rG$. Therefore, the Zariski-closure $\rH$ of the image of $\pi_1(A)$ in $\rG$ is a normal algebraic subgroup of $\rG$. But $\pi_1(A)$ is commutative, hence $\rH$ is commutative too. Since $\rG$ is almost-simple, $\rH$ is necessarily a finite subgroup of $\rG$. Since $\rho$ is big, this forces $\dim A = 0$, hence $X$ is of general type.
\end{proof}

We now finish the proof of the Theorem~\ref{non-Archimedean key}. We keep the notation from the statement. Up to replacing $k$ by a finite extension, one may assume that $\rG$ is split over $k$. Let $f \colon \tilde{X} \to \mathscr{B}(\rG, k)$ be a pluriharmonic $\rho$-equivariant map. Since $\Sing(f)$ is contained in a closed analytic subset of $X$ \cite[Proposition 1.3.3]{Eyssidieux-reductive} distinct from $X$ and since the monodromy of the local system $F(f)$ is finite, there exists a normal ramified Galois covering $p \colon X^\prime \to X$, étale over $\Reg(f)$, such that the complex local system $p ^{-1} \left(F(f) \right)$ on $p^{-1}\left(\Reg(f) \right)$ is trivial, hence extends to $X^\prime$. Let $Z$ be a smooth projective variety and $Z \to X^\prime$ be a birational map which is an isomorphism in corestriction to $p^{-1}\left(\Reg(f) \right)$. Let $\pi \colon Z \to X$ denote the induced map, with lifting $\tilde \pi \colon \tilde Z \to \tilde X$. Since $\pi$ is finite surjective, the image of the homomorphism $\pi_\ast \colon \pi_1(Z) \to \pi_1(X)$ has finite index in $\pi_1(X)$, hence the image of the induced representation $\rho_{|Z} \colon \pi_1(Z) \to \rG(k)$ is also Zariski-dense and unbounded. Moreover, $\rho_{|Z}$ is also a big representation. Therefore, $Z$, $\rho_{|Z}$ and $f \circ \tilde{\pi}$ satisfies the assumption of Proposition~\ref{non-Archimedean key-special case}. Therefore $Z$ is a projective variety of general type with a generically finite map to an abelian variety. It follows by \cite[Corollary 1]{Yamanoi-maximal-Albanese} that $\overline{\Sp_{ab}(Z)} \neq Z$.\\

Let $a \colon A \dasharrow X$ be a non-constant rational map from an abelian variety. Assume that the image of $a$ is not contained in $\Sing(f)$. Let $Y$ be a projective desingularization of the graph of $a$, so that there is an induced algebraic map $g \colon Y \to X$, with lifting $\tilde g \colon \tilde Y \to \tilde X$. Since $Y$ is birational to $A$, its fundamental group is abelian. In particular, thanks to Proposition~\ref{Klocal system trivial on abelian varieties}, the complex local system $g^{-1} F(f)= F(f \circ \tilde g)$ is trivial.\\

Then $Y^o := g^{-1}(\Reg(f))$ is a dense Zariski-open subset of $Y$ contained in $\Reg(f \circ \tilde g)$, and $g$ induces a map $g_{|Y^o} \colon Y^o \to \Reg(f)$. By construction, the complex local systems $(g_{|Y^o})^{-1} F(f)$ and $F(f \circ \tilde g)_{|Y^o}$ are isomorphic. Since the local system $F(f \circ \tilde g)$ is trivial, there is an algebraic map $Y^o \to Z$ lifting $g_{|Y^o} \colon Y^o \to X$. It follows that $\Sp_{ab}(X)$ is contained in the union of $\Sing(f)$ and the image of $\Sp_{ab}(Z)$. Since $\overline{\Sp_{ab}(Z)} \neq Z$, we conclude that $\overline{\Sp_{ab}(X)} \neq X$.

 
\section{Proof of Theorem \ref{Lang for big}}
In this section we complete the proof of Theorem~\ref{Lang for big}.

\subsection{A structural result}
 As a first step, we establish a structural result, whose formulation and proof are inspired by the construction of the Shafarevich morphism in \cite{Eyssidieux-reductive} and \cite{Brunebarbe-Shafarevich}.
\begin{prop}\label{Structure result}
Let $\cL$ be a large complex local system on a connected normal projective complex algebraic variety $X$. Assume that the algebraic monodromy group $\rG$ of $\cL$ is a connected reductive complex algebraic group (the unique split reductive $\bZ$-algebraic group scheme whose base-change to $\bC$ is $\rG$ will also be denoted $\rG$ somewhat abusively). Then there exist finitely many representations $\rho_i$ such that:
\begin{enumerate}
\item Every $\rho_i$ is of the form $\pi_1(X) \rightarrow \rG(k_i)$, with $k_i$ a finite extension of $\bQ_p$ for some prime number $p$; the image of $\rho_i$ is Zariski-dense in $\rG$ and unbounded in $\rG(k_i)$;
\item If $X \to S_i$ denote the Katzarkov-Zuo reduction of $\rho_i$, then the normalization of any fiber  of the induced algebraic map $X \to \prod_i S_i$ admits a large complex local system underlying a polarized variation of pure Hodge structures with discrete monodromy.
\end{enumerate} 
Moreover, up to replacing $X$ by a finite étale cover, one can assume that the image of the representations $\rho_i$ are torsion-free.    
\end{prop}

\begin{proof} 
We first recall some preliminaries. Let $Y$ be a connected complex algebraic variety. Fix $y \in Y$. Let $R(Y,y, \rG)$ be the affine $\bQ$-scheme of finite type that represents the functor that associates to any $\bQ$-algebra $A$ the set of group homomorphisms from $\pi_1(Y,y)$ to $\rG(A)$. We denote by $M_B(Y, \rG)$ the affine $\bQ$-scheme of finite type corresponding to the finitely generated $\bQ$-algebra $\bQ[R(\pi_1(Y,y), \rG)]^{\rG}$. (For any other choice of base-point $y^\prime \in Y$, the $\bQ$-schemes $M_B(Y,y^\prime,G)$ and $M_B(Y,y,G)$ canonically isomorphic.) A representation $\rho \colon \pi_1(Y,y) \to \rG(\bC)$ is called reductive if the Zariski-closure of its image is a reductive algebraic subgroup of $\rG(\bC)$. The $\bC$-points of $M_B(Y, \rG)$ are naturally in bijection with the $\rG(\bC)$-conjugacy classes of elements in $R(Y,y, \rG)(\bC)$ corresponding to reductive representations. See \cite{Lubotzky-Magid} and \cite{Sikora} for details.\\

Assume that $Y$ is smooth projective. The Corlette-Simpson non-abelian Hodge correspondence yields an equivalence between the category of reductive representations $\pi_1(Y,y) \to \rG(\bC)$ and the category of polystable Higgs $\rG$-torsors with vanishing rational Chern classes. This endows the topological space $M_B(Y, \rG)(\bC)$ with a functorial continuous action of $\bC^\ast$ by scaling the Higgs field of the corresponding Higgs bundle. The fixed points of this action are precisely the $\rG(\bC)$-conjugacy classes of (necessarily reductive) representations $\pi_1(Y,y) \to \rG(\bC)$ that occur as the monodromy of a complex local system underlying a complex polarized variation of Hodge structures ($\bC$-VHS). See \cite{SimpsonHiggs}. \\

We proceed to the proof of the proposition, keeping the notation from its statement. Fix $x \in X$. By \cite[Proposition 8.2]{A'Campo-Burger}, the subset of $R(X,x, \rG)(\bC)$ consisting in representations with Zariski-dense image in $\rG$ is a dense Zariski-open subset defined over $\bQ$. Its image $M_B(X, \rG)(\bC)^{ZD}$ in $M_B(X, \rG)(\bC)$ is therefore a $\bQ$-constructible subset. \\

Let $\Sigma$ be the collection of all representations $\rho \colon \pi_1(X) \rightarrow \rG(k)$ with values in a finite extension $k$ of $\bQ_p$ for some prime $p$ and with Zariski-dense image in $\rG$. Let $\Theta \subset \Sigma$ be a finite subset and let $X \to S_\Theta =  \prod_{\rho \in \Theta}  S_\rho$ be the product of the Katzarkov-Zuo reductions $X \to S_\rho$ of all $\rho \in \Theta$.
The associated equivalence relation on $X$ is defined by the closed subvariety $X \times_{S_\Theta} X$ of $X \times X$. If $\Theta^\prime \subset \Sigma$ is another finite subset containing $\Theta$, then $X \times_{S_{\Theta^\prime}} X$ is contained in $X \times_{S_\Theta} X$. Therefore, by noetherianity, one can assume that $X \times_{S_{\Theta}} X = X \times_{S_{\Sigma}} X$. In other words, any representation $\rho \in \Sigma$ has bounded image after restriction to any irreducible component of a fiber of the induced algebraic map $X \to S_\Theta$. One may and will also assume that every $\rho \in \Theta$ has unbounded image in $\rG(k)$.\\

Let $F$ be the normalization of an irreducible component of a fiber of the map $X \to S_\Theta$. Let $M$ denote the image of $M_B(X, \rG)(\bC)$ in $M_B(F,\rG)(\bC)$. It is a $\bQ$-constructible subset of $M_B(F,\rG)(\bC)$. By applying \cite[Theorem 6.8]{Brunebarbe-Shafarevich} to $M_B(X, \rG)(\bC)^{ZD}$, it follows that $M$ consists in finitely many points. Moreover, if $\tilde F$ is a desingularization of $F$, then the image of $M_B(X, \rG)(\bC)$ in $M_B(\tilde F,\rG)(\bC)$ is $\bC^\ast$-invariant and consists a fortiori in finitely many points. Therefore, all these points are $\bC^\ast$-fixed, hence comes from a $\bC$-VHS. Since $F$ is normal, this implies that all elements in $M$ are themselves coming from $\bC$-VHS.\\

In particular, the pull-back of $\cL$ to $F$ underlies a $\bC$-VHS and, if 
\[ \rho^\cL \colon \pi_1(X,x) \to \rG(\bC)\]
denotes the monodromy representation of $\cL$ and $\rho^\cL_{|F}$ its pull-back to $F$, then $\rho^\cL_{|F}$ is $\rG(\bC)$-conjugated to a representation with values in $\rG(K)$ for some number field $K$. After replacing $K$ with a suitable finite extension, the representation $\rho^\cL_{|F}$ can also be viewed as the restriction of a representation $\pi_1(X,x) \to \rG(K)$ with a Zariski-dense image. The definition of $\Theta$ then implies that for every prime ideal $\mathcal{P}$ in the ring of integers $\mathcal{O}_K$ of $K$, the corresponding representation into the completion, $\pi_1(F) \to \rG(K_\mathcal{P})$, has a bounded image. Consequently, $\rho^\cL_{|F}$ must take values in $\rG(\mathcal{O}_K)$. \\

Since $M$ is defined over $\bQ$, the $\mathrm{Gal}(\bar \bQ / \bQ)$-conjugates of $\rho^\cL_{|F}$ also belong to $M$. Therefore, they come from a $\bC$-VHS and are $\rG(\bC)$-conjugated to a representation with values in $\rG(\bar \bZ)$. Taking their sum, we obtain a representation coming from a $\bC$-VHS with integral monodromy up to conjugation. Furthermore, the resulting representation is large, since at least one (and in fact all) of its direct summands is large.\\

Finally, we claim that, after possibly replacing $X$ by a finite étale cover, one may assume that the images of all representations in $\Theta$ are torsion-free. Let $\pi \colon X^\prime \to X$ be a finite étale cover such that all the representations $\pi^{-1} \rho$ with $\rho \in \Theta$ have a torsion-free image. For every $\rho \in \Theta$, the Katzarkov-Zuo reduction $X^\prime \to S_\rho^\prime$ of $\pi^{-1} \rho$ is the Stein factorization of the composite map $X^\prime \to X \to S_\rho$. It follows that the pull-back of $M_B(X, \rG)$ to the normalization of any irreducible component of a fiber of the induced algebraic map $X^\prime \to \prod_{\rho \in \Theta} S_\rho^\prime$ consists also in finitely many points. Therefore one can proceed as above.
\end{proof}

We turn to the proof of Theorem~\ref{Lang for big}. Let $X$ be a connected normal projective complex algebraic variety endowed with a big complex local system $\cL$. We shall prove the equivalence of the four properties stated in Theorem~\ref{Lang for big}. By Proposition~\ref{Special sets and finite étale cover}, we may, without loss of generality, replace $X$ by a finite étale cover. In particular, we may assume that the algebraic monodromy group $\rG$ of $\cL$ is connected.

\subsection{The semisimple case}
We first consider the case where the algebraic monodromy group $\rG$ of $\cL$ is semisimple. Then, by Theorem \ref{known results},  the algebraic variety $X$ is of general type. Therefore, in this special case, Theorem \ref{Lang for big} becomes equivalent to the non Zariski density of $\Sp_{\ast}(X)$ in $X$ for any $\ast \in \{ alg, ab, h \}$.\\

By Theorem \ref{existence of Shafarevich morphism}, up to replacing $X$ by a finite étale cover, there exists a normal projective variety $Y$, a large local system $\cM$ on $Y$ and fibration $f \colon X \to Y$ such that $f^{-1} \cM = \cL$. Since by assumption $\cL$ is big, $f$ is a birational map. Therefore, thanks to Proposition \ref{Special sets and birational morphism}, one may assume that $\cL$ is large.\\

Also, one may reduce to the case where $\rG$ is connected almost-simple simple as follows. Since $\rG$ is a connected semisimple complex algebraic group, there exists an isogeny $\rG \to \prod_i \rG_i$ onto a product of connected almost-simple complex algebraic groups. For every $i$, let $X \to X_i$ be the Shafarevich morphism of the induced (Zariski-dense) representation $\rho_i \colon \pi_1(X) \to \rG_i(\bC)$. Up to replacing $X$ by a finite étale cover, one can assume that $\rho_i$ factors through $X_i$ for every $i$. Since $\rho$ is large, the induced morphism $X \to \prod_i X_i$ is finite.
Therefore, if we know that $\Sp_{\ast}(X_i)$ is not Zariski-dense in $X_i$, then it follows by Proposition \ref{lemma-special-sets} that $\overline{\Sp_{\ast}(X)} \neq X$.\\

Therefore, it remains to consider the case where $\cL$ is large and $\rG$ is a connected almost-simple complex algebraic group. Thanks to Corollary~\ref{Equality of special subsets}, we need only to prove that $\Sp_{ab}(X)$ is not Zariski dense in $X$. With the notation of Proposition \ref{Structure result}, let $f\colon X \to Y$ denote the Stein factorization of the morphism $X \to \prod_i S_i$. Note that for every $i$ the induced morphism $Y \to S_i$ is surjective, since its precomposition $X \to S_i$ with the canonical morphism $X \to Y$ is surjective. Thanks to Corollary \ref{Katzarkov-Zuo images satisfy Lang}, $\overline{\Sp_{ab}(S_i)} \neq S_i$ for every $i \in I$. Using Proposition \ref{lemma-special-sets}, it follows that $\overline{\Sp_{ab}(Y)} \neq Y$. \\

Therefore, in view of Proposition \ref{Special sets and fibrations}, it is sufficient to prove that $\Sp_{ab}(X/Y)$ is not Zariski-dense in $X$. Since $X$ is normal, there exists a Zariski-dense open $Y^o$ of $Y$ over which the (geometric) fibers of $f$ are normal. Moreover, by Proposition \ref{Structure result}, any such fiber $X_y$ admits a large complex local system that underlies a polarized variation of pure Hodge structures with discrete monodromy. It follows from Proposition \ref{Special sets with PVHS} that $\Sp_{ab}(X_y) = \varnothing$ for any $y \in Y^o$. This proves that $\Sp_{ab}(X/Y)$ is contained in $f^{-1}(Y \backslash Y^o)$, hence it is not Zariski-dense in $X$. This concludes the proof of Theorem \ref{Lang for big} when $\rG$ is semisimple.


\subsection{The general case}
We now make no assumption on the algebraic monodromy group $\rG$ of $\cL$.
By Proposition \ref{fibration solvable/semisimple}, up to replacing $X$ by a finite étale cover, there exists a fibration $f \colon X \to Y$ onto a normal irreducible projective complex algebraic variety $Y$ such that:
\begin{itemize}
\item $Y$ admits a large complex local system with a semisimple algebraic monodromy group;
\item the generic fiber $F$ of $f$ admits a morphism to an abelian variety which is generically finite onto its image.
\end{itemize}
We know by Theorem \ref{known results} that $Y$ is of general type and we have proved in the preceding section that $\overline{\Sp_{\ast}(Y)} \neq Y$ for any $\ast \in \{ alg, ab, h \}$. Moreover, thanks to \cite{Yamanoi-maximal-Albanese} and \cite{Bruni-relative-Lang}, for any $\ast \in \{ alg, ab, h \}$, $F$ is of general type if, and only if, $\Sp_{\ast}(F)$ is Zariski-dense in $F$.\\

Assume that $X$ is not of general type. Then, by a result of Kollár \cite[p.363, Theorem]{Kollar-subadditivity}, $F$ is not of general type. Consequently, for any $\ast \in \{ alg, ab, h \}$, $\Sp_{\ast}(F)$ is Zariski-dense in $F$, and therefore $\Sp_{\ast}(X)$ is Zariski dense in $X$ as well.\\

Assume that $X$ is of general type. It follows from Iitaka’s easy addition formula \cite[Theorem 11.9]{Iitaka-book} (see also \cite[Lemma 2.3.31]{Fujino-book}) that $F$ is of general type. By applying the following result to the corestriction of $f$ to its smooth locus, we get that $\Sp_{\ast}(X/Y)$ is not Zariski-dense in $X$ for any $\ast \in \{ alg, ab, h \}$.

\begin{thm}[cf. \cite{Bruni-relative-Lang}]
Let $f \colon X \to Y$ be a smooth proper surjective morphism with connected fibers between smooth complex algebraic varieties. Assume that the fibers of $f$ over a Zariski-dense open subset of $Y$ are of general type and of maximal Albanese dimension. Then, for any $\ast \in \{ alg, ab, h \}$, $\Sp_{\ast}(X/Y)$ is not Zariski-dense in $X$. 
\end{thm}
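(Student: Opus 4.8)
\emph{Strategy and reductions.} The plan is to reduce the relative statement to its absolute counterpart — the theorem of Yamanoi \cite[Corollary 1]{Yamanoi-maximal-Albanese} that $\Sp_{ab}(W)$ is not Zariski-dense in a projective variety $W$ of general type and of maximal Albanese dimension — by a spreading-out argument, the point being that the relevant ``relative special subset'' is constructible and hence controlled by a general fibre. Since $\Sp_{ab}(X/Y)$ is the union, over the connected components of $Y$, of the corresponding relative special subsets, and since $f$ is smooth with connected fibres, we may assume that $Y$, hence $X$ and all the fibres $X_y$, are smooth and irreducible. Replacing $Y$ by a dense Zariski-open subset is harmless, as it only removes from $\Sp_{ab}(X/Y)$ a subset contained in some $f^{-1}(Y\setminus Y_0)$, of dimension $<\dim X$ since $f$ is equidimensional; passing to a finite étale cover of $Y$ is likewise harmless. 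We may thus assume that every fibre of $f$ is of general type and of maximal Albanese dimension, that there is an abelian scheme $\pi\colon\mathcal A\to Y$ together with a $Y$-morphism $a\colon X\to\mathcal A$ restricting on each fibre to the Albanese morphism $a_y\colon X_y\to\mathcal A_y$, which is generically finite onto its image $\mathcal Z_y:=a_y(X_y)$. Let $E\subset X$ be the closed locus where $a$ has positive-dimensional fibres; since $a_y$ is generically finite onto $\mathcal Z_y$, the intersection $E\cap X_y$ is a proper closed subset of the irreducible variety $X_y$ for every $y$, so a dimension count gives $\dim E<\dim X$, whence $\overline E\neq X$. Let $\Sigma\subseteq X$ be the union, over $y\in Y$, of the images of the non-constant rational maps from abelian varieties to $X_y$ whose image is not contained in $E$. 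Then $\Sp_{ab}(X/Y)\subseteq E\cup\Sigma$, so it suffices to show that $\Sigma$ is not Zariski-dense in $X$.

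\emph{Constructibility of $\Sigma$ (the main point).} A subvariety $V\subseteq X_y$ occurs in $\Sigma$ if and only if $V\not\subseteq E$ and $V$ is birational to an abelian variety. Indeed the ``if'' direction is clear; conversely, if $V=\overline{\mathrm{image}(A\dasharrow X_y)}$ with $A$ an abelian variety and $V\not\subseteq E$, then $a_y$ maps $V$ generically finitely onto $a_y(V)$, which is a positive-dimensional translated abelian subvariety of $\mathcal A_y$ contained in $\mathcal Z_y$; hence $V$ is of maximal Albanese dimension, and it is of Kodaira dimension $0$, being dominated by the abelian variety $A$, so $V$ is birational to an abelian variety by Kawamata's characterization \cite{Kawamata-characterization}. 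Moreover such a $V$ is an irreducible component of $a^{-1}(a_y(V))$, so it is determined by $a_y(V)$ and finite combinatorial data. Now a relative version of Kawamata's structure theorem for subvarieties of abelian varieties — controlling, uniformly in $y$, the translated abelian subvarieties of the fibres $\mathcal A_y$ that can arise as $a_y(V)$ for such $V$ — shows that these subvarieties $V$ vary in a family of finite type over $Y$; combined with the constructibility in families of the conditions ``maximal Albanese dimension'', ``Kodaira dimension zero'' and ``not contained in $E$'', this produces a finite-type $Y$-scheme $P$ and a closed subscheme $\mathcal V\subseteq X\times_Y P$ with $\Sigma=\bigcup_{p\in P}\mathcal V_p$. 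In particular $\Sigma$, being the image of $\mathcal V$ under $\mathcal V\to X$, is a constructible subset of $X$.

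\emph{Conclusion and the main obstacle.} Suppose, for contradiction, that $\Sigma$ is Zariski-dense in $X$. Being constructible and dense in the irreducible variety $X$, it contains a dense Zariski-open subset $U\subseteq X$. For a general $y\in Y$ the intersection $U\cap X_y$ is then dense in $X_y$; and since the fibres of $f$ over distinct points of $Y$ are disjoint, $\Sigma\cap X_y$ is exactly the union of the images of the non-constant rational maps from abelian varieties to $X_y$ with image not contained in $E$, hence is contained in $\Sp_{ab}(X_y)$. Thus $\Sp_{ab}(X_y)$ would be Zariski-dense in the projective variety $X_y$, which is of general type and of maximal Albanese dimension — contradicting Yamanoi's theorem. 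Therefore $\Sigma$, and a fortiori $\Sp_{ab}(X/Y)\subseteq E\cup\Sigma$, is not Zariski-dense in $X$. The one substantial point is the constructibility used above: it rests on a uniform, relative form of the Kawamata–Ueno structure theory for families of subvarieties of an abelian scheme — in particular on controlling how the relevant abelian subvarieties of the fibres $\mathcal A_y$ vary with $y$ — and this is where the work of \cite{Bruni-relative-Lang} is concentrated; once it is in hand, the rest is routine spreading-out and dimension counting, with Yamanoi's theorem as the sole transcendental input.
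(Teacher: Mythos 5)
The paper does not prove this statement: it is imported verbatim from \cite{Bruni-relative-Lang} (this is the ``crucial input'' mentioned in the introduction), so there is no internal proof to compare yours against. Judged on its own, your proposal is a reasonable \emph{framing} of how such a proof should go --- reduce to an irreducible base with a relative Albanese, throw away the positive-dimensional-fibre locus $E$ of $a$, observe that the fibrewise abelian images are (birationally) abelian subvarieties sitting over translated abelian subvarieties of the fibres of $\mathcal A\to Y$, package them into a constructible set $\Sigma$, and then use constructibility plus Yamanoi's absolute theorem on a general fibre to get a contradiction. The surrounding reductions and the final spreading-out step are correct.

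However, the proposal is not a proof, because its entire weight rests on the one assertion you flag yourself: that the subvarieties $V\subseteq X_y$ arising as closures of images of abelian varieties (not contained in $E$) ``vary in a family of finite type over $Y$,'' via an unproved ``relative version of Kawamata's structure theorem.'' This is precisely the content of the cited theorem, not a routine lemma: the absolute Kawamata--Ueno theory bounds the translated abelian subvarieties of a \emph{fixed} abelian variety contained in a \emph{fixed} subvariety, and making this uniform in $y$ (so that the resulting $\mathcal V\subseteq X\times_Y P$ is of finite type and the ``birationally abelian'' and ``Kodaira dimension zero'' conditions cut out constructible loci in the parameter space) is the substantive difficulty. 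As written, your argument reduces the theorem to an equally hard unproved statement. A secondary inaccuracy: your claim that such a $V$ is an irreducible component of $a^{-1}(a_y(V))$ can fail, since $a_y^{-1}(a_y(V))$ may have a larger component containing $V$ whose excess dimension lives inside $E$; the recovery of $V$ from the torus $a_y(V)$ therefore needs a more careful formulation (e.g.\ restricting to components dominating $a_y(V)$ generically finitely). In short: correct skeleton, honest about where the gap is, but the gap is the theorem.
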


Since by Proposition \ref{Special sets and fibrations} one has, for any $\ast \in \{ alg, ab, h \}$,
\[ \Sp_\ast(X) \subset f^{-1} \left( \Sp_\ast(Y) \right)  \cup \Sp_\ast(X/Y) . \]

it follows that for any $\ast \in \{ alg, ab, h \}$, $\Sp_{\ast}(X)$ is not Zariski-dense in $X$. This concludes the proof of Theorem \ref{Lang for big}.

\section{Proof of Theorem \ref{Lang for large}}

In this section, we complete the proof of Theorem \ref{Lang for large}.\\

Let $X$ be a projective complex algebraic variety equipped with a large complex local system $\cL$. Thanks to 
 Corollary \ref{Equality of special subsets}, we know that the equality $\Sp_{alg}(X) = \Sp_{ab}(X) = \Sp_h(X)$ holds. Therefore, the special subset is denoted $\Sp(X)$ without risk of confusion.\\

If $X$ is not of general type, then $\Sp(X) = X$ by Theorem \ref{Special sets of not general type varieties}. Otherwise, if $X$ is not of general type, we claim that $\Sp(X)$ is not Zariski-dense in $X$. This is an immediate consequence of Theorem \ref{Lang for big} when $X$ is a normal irreducible projective complex algebraic variety. In general, let $X = \cup_i X_i$ be the decomposition of $X$ in its irreducible components. Since $\Sp(X_i) \subset X_i$ for every $i$, it is sufficient to prove that $\Sp(X_i)$ is not Zariski dense in $X_i$ for at least one of the $X_i$'s which is of general type. Therefore one may assume that $X$ is irreducible. Let $\nu \colon \bar X \to X $ denote the normalization of $X$ and $Z \subset X$ the non-normal locus of $X$. Then $\nu(\Sp(\bar X)) \subset \Sp(X)$ since $\nu$ is finite, and $\Sp(X) \subset \nu(\Sp(\bar X))
 \cup Z$ since $\nu$ is an isomorphism onto its image outside $\nu^{-1}(Z)$. Therefore, $\Sp(X)$ is Zariski-dense in $X$ if and only if $\Sp(\bar X)$ is Zariski-dense in $\bar X$. Moreover, the pull-back of $\cL$ on $\bar X$ is a large complex local system thanks to Proposition \ref{pull-back large}. Therefore one may assume that $X$ is irreducible and normal, hence Theorem \ref{Lang for big} applies. \\

It remains to prove that $ \Sp(X)$ is Zariski-closed in $X$. There is nothing to prove when $\dim X = 0$, so that one can assume that $\dim X > 0$. By Noetherian induction, let us assume that the result holds for all strict subvarieties of $X$. If $X$ is not of general type, then $ \Sp(X) = X$ by Theorem \ref{Special sets of not general type varieties} and the result follows. Otherwise, if $X$ is of general type, then $S :=\Sp(X)$ is not Zariski-dense in $X$. Denoting by $\bar S$ its Zariski-closure in $X$, observe that $\Sp(\bar S) = \Sp(X) = S$.
Therefore, $\Sp(\bar S)$ is Zariski-dense in $\bar S$. Since $\bar S$ is a strict subvariety of $X$, it follows by Noetherian induction that $\bar S$ is not of general type. Thanks to Theorem \ref{Special sets of not general type varieties} again, the equality $\bar S = \Sp(\bar S)$ holds, so that $S = \bar S$.

\small

\bibliography{./biblio}

\bibliographystyle{amsalpha}

\end{document}